\def\Time-stamp%
\newcommand{\Tableau}[2][sY]{{\text{\footnotesize
\scriptsize
\tableau[#1]{#2}}}}
\theoremstyle{plain}   %%amsart
\newtheorem{thm}{Theorem}[section]
\newtheorem{theorem}[thm]{Theorem}
\newtheorem{lemma}[thm]{Lemma}
\theoremstyle{remark}  %%amsart
\newtheorem{remark}[thm]{Remark}
\newtheorem{example}[thm]{Example}
\theoremstyle{definition}  %%amsart
\newtheorem{definition}[thm]{Definition}
\newtheorem{algorithm}[thm]{Algorithm}
\newcommand{\YY}{\mathbb{Y}}
\newcommand{\numof}[1]{|#1|}
\newcommand{\PPP}{\mathcal{P}}
\newcommand{\QQQ}{\mathcal{Q}}
\newcommand{\RRR}{\mathfrak{R}}
\newcommand{\SSS}{\mathcal{S}}
\newcommand{\disjointunion}{\amalg}
\author{NUMATA, Yasuhide}
\address{Department of Mathematical Sciences, Shinshu
University, 3-1-1 Asahi, Matsumoto, Nagano 390-8621 JAPAN}
\keywords{bumping; row insertion; sliding; jeu de taquin; set-valued
tableaux; plane partitions}
\email{nu@math.shinshu-u.ac.jp}
 \thanks{Supported by JSPS KAKENHI Grant Number 00455685.}
\title[A bijective proof of the Cauchy identity]{A bijective proof of the Cauchy identity for Grothendieck polynomials}
\begin{document}

\begin{abstract}
We consider 
pairs of a set-valued column-strict tableau and
a reverse plane partition of the same shape.
We introduce algortithms for them,
 which implies
a bijective proof for
the finite sum Cauchy identity 
for Grothendieck polynomials and dual Grothendieck polynomials.
\end{abstract}
\maketitle

\maketitle

\section{Introduction}
In this paper,
 we consider the stable Grothendieck polynomials $G_\lambda$
and the dual Grothendieck polynomials $g_\lambda$.
They are $K$-theoretic analogues of Schur functions $s_\lambda$.
Grothendieck polynomials introduced in Lascoux--Sch\"utzenberger
 \cite{MR686357}
are the representatives of $K$-theory classes of structure 
sheaves of Schubert varieties.
Fomin and Kirillov introduced
the stable Grothendieck polynomial 
in their combinatorial study of the Grothendieck polynomials
 \cite{MR1394950}.
The stable Grothendieck polynomial is 
a limit of the Grothendieck polynomial.
Buch \cite{MR1946917} showed
stable Grothendieck polynomials 
can be written as weighted generating functions of 
a kind of tableaux.
The dual Grothendieck polynomials is a dual basis of the stable
 Grothendieck polynomial,
and can be written as weighted generating functions of 
another kind of tableaux. 
See also 
\cite{MR3007174},
\cite{MR1932326} and
\cite{MR2377012}.
Schur functions $s_\lambda$ satisfy 
the Cauchy identity
\begin{align*}
 \sum_{\lambda}s_\lambda(x)\cdot s_\lambda(y)=\prod_{i,j}\frac{1}{1-x_iy_j}.
\end{align*}
In \cite{MR2377012},
Lam and Pylyavskyy showed
an analogue of the classical Cauchy identity 
for products of the stable
Grothendieck polynomial and the dual Grothendieck polynomial.
In \cite{MR3249830}, 
Lascoux and Naruse generalized the identity as the finite sum Cauchy identity
\begin{align*}
 \sum_{\lambda}G_\lambda(x_1,\ldots,x_n)\cdot g_\lambda(y_1,\ldots,y_n)
&=
 \sum_{\lambda}s_\lambda(x_1,\ldots,x_n)\cdot s_\lambda(y_1,\ldots,y_n)\\
&=\prod_{i=1}^n\prod_{j=1}^n\frac{1}{1-x_iy_j},
\end{align*}
where the sum is over all $\lambda$ containing by the rectangle shape $(w^n)$.
Since the polynomials $s_\lambda$, $G_\lambda$ and $g_\lambda$ are
weighted generating functions of tableaux,
the finite sum Cauchy identity
means
the existence of
a weight-preserving bijection between some kinds of tableaux.
The main purpose of this paper
is to give a bijective proof
of the finite sum Cauchy identity
for stable Grothendieck polynomials and dual Grothendieck polynomials.

This paper is organized as follows:
In Section \ref{section:tableau},
we define some notion for tableaux
and define the weighted generating functions them.
We introduce 
some algorithms to define
bijections in Section \ref{sec:defofalg}.
We use the bumping and the sliding
to define our algorithm.
In Section \ref{sec:main},
we give the main theorem which implies the Cauchy identity.
 We show the main theorem  in Section \ref{sec:proof}.
In Section \ref{sec:finalrmk},
we consider the case of Young diagram 
with one column.

\section{Three kinds of tableaux}
\label{section:tableau}
%\subsection{Integer-valued column-strict tableaux}
Here we define some notation for Young diagrams and tableaux.
Let $\YY$ be the set of Young diagrams. %, i.e., partitions of integers.
We write $(\lambda_1,\lambda_2,\ldots)$
to denote the  Young diagram
consisting of $\lambda_i$ boxes in the $i$-th row.
We use so-called English notation.
% For $\lambda\in \YY$,
% we write $\transposed{\lambda}$
% to denote the transpose or conjugate of $\lambda$,
% i.e., the Young diagram satisfying
% $(i,j)\in\transposed{\lambda}$
% if and only if $(j,i)\in\lambda$.
For $\lambda\in\YY$,
we call a box $(i,j)\in \lambda$
a \emph{corner} if 
 $(i+1,j)\not\in \lambda$ 
and
 $(i,j+1)\not\in \lambda$.
We call $(i,j)\not\in\lambda$
a \emph{cocorner}
if
 $(i-1,j)\in \lambda$ 
and
 $(i,j-1)\in \lambda$.
For $\lambda'$, $\lambda\in \YY$ satisfying $\lambda'\subset\lambda$,
we define the \emph{skew Young diagram} $\lambda/\lambda'$
to be the subset $\lambda\setminus\lambda'$ of boxes.
For $\lambda\in\YY$,
$\left|\lambda\right|$ denotes the number of the boxes in $\lambda$.
We call $T$ an \emph{integer-valued column-strict tableau} of shape $\lambda$
if
\begin{enumerate}
\item $T_{i,j}$ is a positive integers;
\item if $(i,j),(i',j) \in \lambda$ satisfy $i<i'$,
 then $T_{i,j}<T_{i',j}$; and
\item if $(i,j),(i,j') \in \lambda$ satisfy $j<j'$,
 then $T_{i,j}\leq T_{i,j'}$.
\end{enumerate} 
We sometimes call integers in a tableau \emph{alphabets},
to distinguish between them and the other integers, e.g., coordinates of boxes.
For a skew Young diagram $\lambda/\lambda'$,
we define an
\emph{integer-valued column-strict skew tableau} of shape
$\lambda/\lambda'$
in the same manner.
We define $\SSS_\lambda$ to be the set of 
integer-valued column-strict tableaux of shape $\lambda$.
For $T\in\SSS_\lambda$,
define $\mu(T)$ to be the sequence $(\mu_1,\mu_2,\ldots)$
such that $\mu_k=\numof{\Set{(i,j)\in\lambda|T_{i,j} = k }}$.
In other words,
$\mu_k$ is the multiplicity of the alphabet $k$
in the tableau $T$.
For a Young diagram $\lambda\in\YY$, and formal variables $x=(x_1,x_2,\ldots)$,
we define the weighted generating function $s_\lambda(x)$ by
\begin{align*}
 s_\lambda(x)=\sum_{T\in\SSS_\lambda} x^{\mu(T)},
\end{align*}
where $x^{\alpha}$ stands for the monomial
$x_1^{\alpha_1}x_2^{\alpha_2}\cdots$.
The generating functions $s_\lambda(x)$ are called 
\emph{Schur functions}.
For the Young diagram $(l)$ consisting of one row with $l$ boxes,
$s_{(l)}(x)$ equals the $l$-th homogeneous complete symmetric polynomial $h_l(x)$.
For the Young diagram $(1^l)$ consisting of one column with $l$ boxes,
$s_{(1^l)}(x)$ equals the $l$-th elementary symmetric polynomial $e_l(x)$.

%\subsection{Set-valued column-strict tableaux}
For $\lambda\in\YY$,
we call $P$ a \emph{set-valued column-strict tableau} 
of shape $\lambda$
if
\begin{enumerate}
\item $P_{i,j}$ is a nonempty finite subset of positive integers;
\item if $(i,j),(i',j) \in \lambda$ satisfy $i<i'$,
 then $\max(P_{i,j})<\min(P_{i',j})$; and
\item if $(i,j),(i,j') \in \lambda$ satisfy $j<j'$,
 then $\max(P_{i,j})\leq\min(P_{i,j'})$.
\end{enumerate} 
We define $\PPP_\lambda$ to be the set of 
set-valued column-strict tableaux of shape $\lambda$.
For $P\in \PPP_\lambda$,
we define $\varepsilon(P)$ by
\begin{align*}
\varepsilon(P)&=\sum_{(i,j)\in\lambda} (\numof{P_{i,j}}-1).
\end{align*}
A set-valued column-strict tableau $P\in\PPP_\lambda$ satisfies $\varepsilon(P)=0$
if and only if $\numof{P_{i,j}}=1$ for all $(i,j)\in\lambda$.
We can identify $P\in\PPP_\lambda$
satisfying $\varepsilon(P)=0$
with an integer-valued column-strict tableau.
For $\lambda\in\YY$,
we define $\tilde\SSS_\lambda$ and $\PPP_\lambda^+$ by
\begin{align*}
\tilde\SSS_\lambda&=\Set{P\in\PPP_\lambda|\varepsilon(P)=0}, \\
\PPP_\lambda^+&=\Set{P\in\PPP_\lambda|\varepsilon(P)>0} \\
 &=\PPP_\lambda\setminus\tilde\SSS_\lambda.
\end{align*}
We define $\mu(P)$ to be the sequence of integers $(\mu_1,\mu_2,\ldots)$
such that $\mu_k=\numof{\Set{ (i,j) \in \lambda| k\in P_{i,j}}}$.
It is easy to show that
\begin{align}
 \left|\lambda\right| =-\varepsilon(P)+\sum_i \mu_i.
\label{eq:sum:mu}
\end{align}
For a Young diagram $\lambda\in \YY$ and formal variables $\beta$ and $x=(x_1,x_2,\ldots)$,
we define a weighted generating function $G_\lambda(\beta,x)$
by
\begin{align*}
 G_\lambda(\beta,x)
&=\sum_{P\in\PPP_\lambda} (-\beta)^{\varepsilon(P)}x^{\mu(P)}.
\end{align*}
We call $G_\lambda(x)=G_\lambda(1,x)$ a \emph{(stable Gra\ss{}mannian) Grothendieck polynomial}.
It is easy to show that $G_\lambda(0,x)=s_\lambda(x)$.
% \begin{align*}
% G_\lambda(0,x)
% &=\sum_{P\in\PPP_\lambda\setminus\PPP_\lambda^{+}} x^{\mu(P)}=s_\lambda(x).
% \end{align*}

%\subsection{Plane partitions}
For $\lambda\in\YY$,
we call $Q$ a \emph{reverse plane partition} of $\lambda$
if
\begin{enumerate}
\item $Q_{i,j}$ is a positive integer;
\item if $(i,j),(i',j) \in \lambda$ satisfy $i<i'$,
 then $Q_{i,j}\leq Q_{i',j}$; and
\item if $(i,j),(i,j') \in \lambda$ satisfy $j<j'$,
 then $Q_{i,j}\leq Q_{i,j'}$.
\end{enumerate} 
We define $\QQQ_\lambda$ to be the set of 
reverse plane partitions of $\lambda$.
For $Q\in\QQQ_\lambda$,
we define $\delta(Q)$ by
\begin{align*}
 \delta(Q)&=\numof{\Set{(i,j)\in\lambda| Q_{i,j}=Q_{i+1,j}}}. 
\end{align*}
For $Q\in\QQQ_\lambda$,
it follows from definition 
that $Q\in\SSS_\lambda$
if and only if
$\delta(Q)=0$.
For $\lambda\in\YY$, 
we define $\QQQ_\lambda^+$ by
\begin{align*}
\QQQ_\lambda^+&=\Set{Q\in \QQQ_\lambda | \delta(Q)>0}\\
 &=\QQQ_\lambda\setminus\SSS_\lambda.
\end{align*}
We also define $\nu(Q)$ to be the sequence $(\nu_1,\nu_2,\ldots)$
such that
\begin{align*}
\nu_k=\numof{\Set{j|\text{There exists $(i,j)\in\lambda$ such that $Q_{i,j} = k $}}}. 
\end{align*}
It is easy to show that
\begin{align}
\left|\lambda\right| =\delta(Q)+\sum_i \nu_i.
 \label{eq:nu}
\end{align}
For a Young diagram $\lambda\in \YY$ and formal variables $\beta$ and $y=(y_1,y_2,\ldots)$,
we define a weighted generating function $g_\lambda(\beta,y)$
by
\begin{align*}
 g_\lambda(\beta,y)
&=\sum_{Q\in\QQQ_\lambda} \beta^{\delta(Q)}y^{\nu(Q)}.
\end{align*}
We call $g_\lambda(y)=g_\lambda(1,y)$
a \emph{dual Grothendieck polynomial}.
It is easy show that $ g_\lambda(0,y)=s_\lambda(y)$.
% \begin{align*}
%  g_\lambda(0,y)
% &=\sum_{Q\in\QQQ_\lambda\setminus\QQQ_\lambda^+} y^{\nu(Q)}
% =s_\lambda(y).
% \end{align*}

\section{Our algorithm}
\label{sec:defofalg}
\subsection{The row insertions and the jeu de taquin}
\label{sec:knownalgorithm}
Here we recall some well-know algorithms for 
integer-valued column-strict tableaux.
\begin{algorithm}[Row bumping]
 Let $T_{i,1},\ldots,T_{i,\lambda_i}$ 
be the $i$-th row of an integer-valued column-strict tableau $T\in \SSS_\lambda$,
and $x$ an alphabet.
We define $T'$ and $y$ in the following manner:
\begin{enumerate}
 \item  Find $j$ satisfying the following:
 \begin{enumerate}
  \item if $j'< j$ and $x'=T_{i,j'}$, then
	$x'\leq x$; and
  \item if $j\leq j''$ and  $x''=T_{i,j''}$, then
	$x < x''$.
 \end{enumerate}
 \item If $T$ has a box at $(i,j)$, then
       define $y$ to be $T_{i,j}$ and
       define $T'$ to be the tableau obtained from $T$
       by putting the alphabet $x$ at the box $(i,j)$.
 \item If $T$ has no box at $(i,j)$, then
       define $y$ to be null
       and define $T'$ to be the tableau obtained from $T$
       by adding the new box $(i,j)$ with the alphabet $x$.
\end{enumerate}
\end{algorithm}

If we 
repeat inserting the integer bumped out from the $i$-th row 
into the $(i+1)$-th row by the row bumping
until the null is bumped out,
then we obtain an integer-valued column-strict tableau.
We call the whole process  the \emph{row-insertion}.
The row-insertion stops when
an integer is inserted into a corner as a new box.
We call the set of boxes where an integer is inserted 
in the process of the row-insertion
the \emph{bumping route}.
If an integer is inserted into the box $(1,j_1)$ by the first bumping of 
the row-insertion,
then the bumping route is a subset
of $\Set{(i,j)|j\leq j_1}$.
% \begin{definition}[Row insertion]
%  Let an integer-valued column-strict tableau $T^{(0)}$,
%  $y_0$ an alphabet, 
%  and $i_0$ an integer.
%  We define $T$ in the followin manner:
%  \begin{enumerate}
%   \item Let $i=i_0$, $y=y_0$ and $T=T^{(0)}$.
%   \item While $y$ is not null, repeat the following.
% 	\begin{enumerate}
% 	 \item Insert $y$ to the $i$-th row of $T$ by the bumping.
% 	       Relace $T$ and $y$ with the resulting tableau and the bumped
% 	       alphabet.
% 	 \item Add $1$ to $i$.
% 	\end{enumerate}
%  \end{enumerate}
% \end{definition}

\begin{algorithm}[Reverse row bumping]
 Let $T_{i,1},\ldots,T_{i,\lambda_i}$ be the $i$-th row of 
an integer-valued column-strict tableau $T\in \SSS_\lambda$,
and $y$ an alphabet.
We define $T'$ and $x$ in the following manner:
\begin{enumerate}
 \item  Find $j$ satisfying the following:
 \begin{enumerate}
  \item if $j'\leq j$ and $y'=T_{i,j'}$, then
	$y'<y$; and
  \item if $j<j''$ and  $y''=T_{i,j''}$, then
	$y \leq y''$.
 \end{enumerate}
 \item 
       Define $x$ to be $T_{i,j}$ and
       define $T'$ to be the tableau obtained from $T$
       by putting the alphabet $y$ at the box $(i,j)$.
\end{enumerate}
\end{algorithm}

Let $T$ be an integer-valued column-strict tableau of shape $\lambda$,
 $(i_1,j_1)$ a corner of $\lambda$.
If we remove a corner  $(i_1,j_1)$ from $T$ and 
insert $T_{i_1,j_1}$ into the $(i_1-1)$-th row of $T$
by the reverse row-bumping,
then we obtain an integer-valued column-strict tableau
and an integer bumped out from the $(i_1-1)$-th row.
If we 
repeat inserting the integer bumped out from the $(i+1)$-th row
into the $i$-th row 
by the reverse row bumping,
then we obtain an integer-valued column-strict tableau 
and an integer bumped out.
We call the whole process  the \emph{reverse row-insertion}.
The reverse row-insertion is the inverse of the row-insertion.
We call the set of boxes where an integer is bumped out in the process
of the reverse row-insertion the \emph{reverse-bumping route}.

Next we recall another kind of algorithms.
\begin{algorithm}[Sliding]
 Let $T^{(0)}$ be an integer-valued column-strict tableau 
 with a null box $(i_0,j_0)$.
 We define $T$ in the following manner:
\begin{enumerate}
 \item If $T^{(0)}_{i_0+1,j_0}>T^{(0)}_{i_0,j_0+1}$ or $T^{(0)}$ has no box at $(i_0+1,j_0)$,
       then let $T$ be the tableau obtained from $T^{(0)}$ by swapping 
       the entries of boxes $(i_0,j_0)$ and $(i_0,j_0+1)$.
 \item If $T^{(0)}_{i_0+1,j_0}\leq T^{(0)}_{i_0,j_0+1}$ or $T^{(0)}$ has no box at $(i_0,j_0+1)$,
       then let $T$ be the tableau obtained from $T^{(0)}$ by swapping 
       the entries of boxes $(i_0,j_0)$ and $(i_0+1,j_0)$.
\end{enumerate}
\end{algorithm}

Let $T$ be an integer-valued column-strict skew tableau
of shape $\lambda/(1)$.
We regard the box $(1,1)$ as a null box.
Repeat the sliding until the null box reaches a corner of $\lambda$,
and then remove the null box.
Then we obtain an integer-valued column-strict tableau.
We call the whole process the \emph{jeu de taquin}.
We call the boxes where the null box passed 
in the process of the jeu de taquin
the \emph{sliding route}.

% \begin{definition}[Jeu de taquin]
%  Let $T^{(0)}$ be an integer-valued column-strict tableau 
%  with a null box $(i_0,j_0)$.
%  We define $T$ in the following manner:
% \begin{enumerate}
%  \item Let $T=T^{(0)}$.
%  \item While the null box is not a corner, repeat the following:
%  \begin{enumerate}
%   \item Move the null box of $T$ by sliding.
% 	Let $T$ be the resulting tableau.
%  \end{enumerate}
% \end{enumerate}
% \end{definition}

\begin{algorithm}[Reverse sliding]
 Let $T^{(1)}$ be an integer-valued column-strict tableau 
 with a null box $(i_1,j_1)$.
 We define $T$ in the following manner:
\begin{enumerate}
 \item If $i_1=1$ or $T^{(1)}_{i_1-1,j_1}<T^{(1)}_{i_1,j_1-1}$,
       then let $T$ be the tableau obtained from $T^{(1)}$ by swapping 
       the entries of boxes $(i_1,j_1)$ and $(i_1,j_1-1)$.
 \item If  $j_1=1$ or $T^{(1)}_{i_1-1,j_1}\geq T^{(1)}_{i_1,j_1-1}$,
       then let $T$ be the tableau obtained from $T^{(1)}$ by swapping 
       the entries boxes $(i_1,j_1)$ and $(i_1-1,j_1)$.
\end{enumerate}
\end{algorithm}
Let $T$ be an integer-valued column-strict tableau of shape $\lambda$,
and $(i,\lambda_{i}+1)$ a cocorner of $\lambda$.
Append a null box to the cocorner $(i,\lambda_{i}+1)$ of $T$.
Repeat the reverse sliding until the null box reaches $(1,1)$.
Then we obtain 
an integer-valued column-strict skew tableau
of shape $(\lambda_1,\ldots,\lambda_{i-1},\lambda_i+1,\lambda_{i+1},\ldots)/(1,1)$.
We call the whole process the \emph{reverse jeu de taquin}.
The reverse jeu de taquin is the inverse of the jeu de taquin.
We call the boxes where the null box passed 
in the process of the reverse jeu de taquin
the \emph{reverse-sliding route}.

\subsection{Operation for tableaux}
%\label{sec:defofbij}
\label{subsec:alg:tab}
If $P\in \PPP_\lambda^+$,
then $P$ has a box containing more than one alphabets.
We define notation for the end of such boxes.
\begin{definition}
For $P\in\PPP_\lambda^+$
define $r_1(P)$ and  $r_1'(P)$  by
\begin{align*}
r_1(P)&=\max \Set{i| \text{ $\numof{P_{i,j}}>1$ for some $j$}},\\
r_1'(P)&=\max\Set{j| \numof{P_{r_1(P),j}}>0}.
\end{align*}
For $P\in \tilde\SSS_\lambda$,
we define $r_1(P)=0$.
%Define $$ to be the set of 
%set-valued column-strict tableaux $P\in\PPP_\lambda$ satisfying $r_1(P)>0$.
%It is easy to show that 
%\begin{align*}
%\varepsilon(P)>0\iff r_1(P)>0.  
%\end{align*}
\end{definition}
%\begin{remark}
We can regard  $P\in \PPP^+_\lambda$ 
as an integer-valued column-strict (skew) tableau,
if we consider only 
the set 
\begin{align*}
\Set{(i,j)\in\lambda| i>r_1(P)}
\end{align*}
of boxes. 
Hence we define $r_2(P)$, $r_2'(P)$ and  $R(P)$
by the row bumping algorithm for integer-valued column-strict tableaux.
%\end{remark}
\begin{algorithm}
\label{algorithm:R(P)}
For $P\in\PPP_\lambda^+$,
we define 
the cocorner $(r_2(P),r_2'(P))$ of $\lambda$
and the set-valued column-strict tableau $R(P)$ 
in the following manner:
\begin{enumerate}
 \item Let $x=\max(P_{r_1(P),r_1'(P)})$.
 \item Remove the alphabet $x$ from the box $(r_1(P),r_1'(P))$ of $P$. 
%or set $P_{r_1(P),r_1'(P)}$ to be $P_{r_1(P),r_1'(P)}\setminus \Set{x}$.
 \item  Insert the alphabet $x$
	into the tableau 
	by the row insertion from the $(r_1(P)+1)$-th row.
	As the result of insertion,
	we obtain a new tableau $P^{(1)}$ of shape $\lambda^{(1)}$.
 \item Define $R(P)\in\PPP_{\lambda^{(1)}}$ to be $P^{(1)}$. 
 \item Define $(r_2(P),r_2'(P))$ to be the new box,
       i.e., the box in $\lambda^{(1)}$ but not in $\lambda$.
\end{enumerate}
\end{algorithm}
\begin{example}
\label{example:P}
Define set-valued column-strict tableaux $P$ and $P'$ by
\begin{align*}
 P&=
\Tableau[Y]{1&12&2&34&4&6 \\ 23&3&\tf 34&5&6&8\\3&4&5&6&7\\4&6&7&8 \\ 6 },\\
 P'&=
 \Tableau[Y]{1&12&2&34&4&6 \\ 23&3&34&5&\tf 67&8\\3&4&5&6&8\\4&6&7&8 \\ 6 }.
\end{align*}
In this case, $(r_1(P),r_1'(P))=(2,3)$ and $(r_1(P'),r_1'(P'))=(2,5)$.
To calculate 
$R(P)$ (\emph{resp.}\ $R(P')$)
we insert the alphabet $4=\max\set{3,4}$ 
(\emph{resp.}\ $7=\max\set{6,7}$)
from the third row of $P$ by
 the row-bumping. Hence we obtain
\begin{align*}
 R(P)&=
\Tableau[Y]{1&12&2&34&4&6 \\ 23&3&\tf
 3&5&6&8\\3&4&\textbf{4}&6&7\\4&\textbf{5}&7&8 \\ 6 &\tf \textbf{6}},\\
 R(P')&=
 \Tableau[Y]{1&12&2&34&4&6 \\ 23&3&34&5&\tf 6&8\\3&4&5&6&\textbf{7}\\4&6&7&8&\tf\textbf{8} \\ 6 }.
\end{align*}
Therefore $(r_2(P),r_2'(P))=(5,2)$ and $(r_2(P'),r_2'(P'))=(4,5)$.
\end{example}

A reverse plane partition $Q\in \QQQ_\lambda^+$ 
contains a box $(i,j)$ such that $Q_{i,j}=Q_{i+1,j}$.
We define notation for the end of such boxes.
\begin{definition}
For $Q\in \QQQ_\lambda^+$,
we define $f_1(Q)$ and $f_1'(Q)$ by
\begin{align*}
 f_1(Q)&=\max \Set{i|\text{$Q_{i,j}=Q_{i+1,j}$ for some $j$}},\\
 f_1'(Q)&=\max\Set{j| Q_{f_1(Q),j}=Q_{f_1(Q)+1,j}}.
\end{align*}
For $Q\in\SSS_\lambda$,
we define $f_1(Q)=0$.
%It is easy to show that
%\begin{align*}
% \delta(Q)>0 \iff f_1(Q)>0.
%\end{align*}
\end{definition}
%\begin{remark}
We can regard $Q\in \QQQ_\lambda^+$
as an integer-valued column-strict tableau
if 
we consider only the set
\begin{align*}
\Set{(i,j)\in\lambda| \text{$i\geq f_1(Q)$ and $j\geq f_1'(Q)$}} 
\setminus \Set{(f_1(Q),f_1'(Q))}
\end{align*}
of boxes. 
Hence we define $f_2(Q)$, $f_2'(Q)$ and $F(Q)$
by the jeu de taquin for 
integer-valued column-strict tableaux.
%\end{remark}
\begin{algorithm}
\label{algorithm:F}
For $Q\in \QQQ^+_\lambda$,
we define 
the corner $(f_2(Q),f_2'(Q))$ of $\lambda$
and the reverse plane partition $F(Q)$ in the following manner:
\begin{enumerate}
 \item Let the box $(f_1(Q),f_1'(Q))$ in $Q$ be null.
 \item Slide the null box to outside by the jeu de taquin.
 \item As the result of sliding,
       we obtain a new tableau $Q^{(1)}$ of shape $\lambda^{(1)}$.
 \item Define $F(Q)\in \QQQ_{\lambda^{(1)}}$ to be $Q^{(1)}$.
 \item Define $(f_2(P),f_2'(P))$ to be 
       the removed box
       $\lambda/ \lambda^{(1)}$.
\end{enumerate}
\end{algorithm}
\begin{example}
\label{example:Q}
Define reverse plane partition $Q$ by
\begin{align*}
 Q&=
\Tableau[Y]{1&1&2&4&5&5 \\ 1&\tf 2&3&4&4&6\\2&2&6&6&7\\3&4&7&9 \\ 5 }.
\end{align*}
In this case, $(f_1(Q),f_1'(Q))=(2,2)$.
To calculate 
$F(Q)$, 
we put a null box at $(2,2)$.
Then we obtain the following by the jeu de taquin:
\begin{align*}
 F(Q)&=
\Tableau[Y]{1&1&2&4&5&5 \\ 1&\tf \textbf{2}&3&4&4&6\\2&\textbf{4}&6&6&7\\3&\textbf{7}&\textbf{9}&\tf \\ 5 }.
\end{align*}
Therefore $(f_2(Q),f_2'(Q))=(4,4)$.
\end{example}

\subsection{Operation for pairs of tableaux}
\label{subsec:alg:pair}
For $\lambda\in\YY$,
we define
 $\RRR_\lambda$,  $\RRR^0_\lambda$ and 
$\RRR^+_\lambda$ 
by
\begin{align*}
 \RRR_\lambda&=\PPP_\lambda\times\QQQ_\lambda,\\
 \RRR^0_\lambda&=\tilde\SSS_\lambda\times\SSS_\lambda\\
 &=\Set{(P,Q)\in\RRR_\lambda|\varepsilon(P)=\delta(Q)=0},\\
 \RRR^+_\lambda&=\RRR_\lambda\setminus \RRR^0_\lambda\\
 &=\Set{(P,Q)\in\RRR_\lambda|\varepsilon(P)+\delta(Q)>0}.
\end{align*}
We decompose $\RRR^+_\lambda$ into the two subsets
${\check{\RRR}}_\lambda$ and ${\hat{\RRR}}_\lambda$.
\begin{definition}
Define ${\check{\RRR}}_\lambda$ to be 
the set of pairs $(P,Q)$ of tableaux in $\RRR^+_\lambda$
satisfying
\begin{enumerate}
\item $r_1(P)>f_1(Q)$; or
\item $r_1(P)=f_1(Q)$ and $r_2(P)\leq f_2(Q)$.
\end{enumerate}
We also 
define ${\hat{\RRR}}_\lambda$ to be 
the set of pairs $(P,Q)$ of tableaux in $\RRR^+_\lambda$
satisfying
\begin{enumerate}
\item $r_1(P)<f_1(Q)$; or
\item $r_1(P)=f_1(Q)$ and $r_2(P)> f_2(Q)$.
\end{enumerate}
\end{definition}
It is easy to show that 
$\RRR^+_\lambda={\check{\RRR}}_\lambda\disjointunion {\hat{\RRR}}_\lambda$.
First we define a map $\check{\varphi}$ from ${\check{\RRR}}_\lambda$ to $\QQQ^+$.
\begin{algorithm}
\label{algorithm:checkvarphi}
 For a $(P,Q)\in {\check{\RRR}}_\lambda$,
 we define $\check{\varphi}(P,Q) \in\QQQ^+$ in the following manner:
\begin{enumerate}
 \item Let $Q^{(1)}$ be $Q$.
 \item Let $Q^{(2)}$ be the tableau obtained from $Q$ by adding 
       a null box at the cocorner $(r_2(P),r_2'(P))$.
 \item 
       \label{def:E2:alg:step:jdt}
       Slide the null box $(r_2(P),r_2'(P))$ of $Q^{(2)}$
       into the inside of $Q^{(2)}$
       by the reverse jeu de taquin
       until the null box moves to $r_1(P)$-th row.
       Let $Q^{(3)}$ be the resulting tableau.
 \item 
       Define $(r_3(P,Q),r_3'(P,Q))$  to be the null box  of $Q^{(3)}$. 
 \item 
       \label{def:E2:alg:step:putalphabet}
       Define $\check{\varphi}(P,Q)$ to be the tableau obtained from  $Q^{(3)}$ 
       by putting the alphabet $Q^{(1)}_{r_3(P,Q),r_3'(P,Q)}$
       into the null box $(r_3(P,Q),r_3'(P,Q))$ of  $Q^{(3)}$. 
 % \item 
 %       The box $(r_2(P),r_2'(P))$
 %       is a cocorner of $\lambda$.
 %       Slide the null box $(r_2(P),r_2'(P))$ of $Q$
 %       into the inside of $Q$
 %       by the reverse jeu de taquin
 %       until the null box moves to $r_1(P)$-th row.
 % \item Let $Q'$ be the resulting tableau 
 %       with the null box in its $r_1(P)$-th row.
 %       Define $(r_3(P,Q),r_3'(P,Q))$  to be the null box  of $Q'$.
 %       By definition $r_3(P,Q)=r_1(P)$.
 %       Put the alphabet $Q_{r_3(P,Q),r_3'(P,Q)}$ into the null box $(r_3(P,Q),r_3'(P,Q))$ of $Q'$.
 %       Define $r_2(P,Q)$ to be the resulting tableau.
\end{enumerate}
\end{algorithm}

\begin{remark}
In the case where $r_1(P)>f_1(Q)$,
$Q$ is column-strict 
if we consider only boxes from $r_1(P)$-th row to $r_2(P)$-th row.
Hence Step \ref{def:E2:alg:step:jdt} is well-defined.
In the case where  $r_1(P)=f_1(Q)$ and $r_2(P)\leq f_2(Q)$,
$Q$ is column-strict 
if we consider only the boxes 
\begin{align*}
\Set{(i,j)\in\lambda| \text{$i\geq f_1(Q)$ and $j\geq f_1'(Q)$}} \setminus \Set{(f_1(Q),f_1'(Q))}.
\end{align*}
Since $r_2(P)\leq f_2(Q)$,
it follows from Lemma \ref{lemma:slidingroute1} 
that
the null box reaches a box in $\Set{(r_1(P),j)|j>f_2(Q)}$.
Hence Step \ref{def:E2:alg:step:jdt} is also well-defined in this case.
\end{remark}
\begin{remark}
%\label{remark:varphi:f}
%\label{remark:varphi:f2r2}
\label{remark:varphi}
The reverse jeu de taquin in Step \ref{def:E2:alg:step:jdt}
 stops if the null box arrives at the $r_1(P)$-th row.
 Hence $Q^{(3)}_{r_3(P,Q)+1,r_3'(P,Q)}=Q^{(1)}_{r_3(P,Q),r_3'(P,Q)}$.
 Moreover, 
 if $(r_3(P,Q),r_3'(P,Q)+1)\in\lambda$,
 then 
\begin{align*}
Q^{(3)}_{r_3(P,Q),r_3'(P,Q)+1}=Q^{(1)}_{r_3(P,Q),r_3'(P,Q)+1}\geq Q^{(1)}_{r_3(P,Q),r_3'(P,Q)}. 
\end{align*}
 Since we put the alphabet $Q^{(1)}_{r_3(P,Q),r_3'(P,Q)}$
 on the box $(r_3(P,Q),r_3'(P,Q))$ of $Q^{(3)}$
 in Step \ref{def:E2:alg:step:putalphabet},
 $\check{\varphi}(P,Q)$ is in $\QQQ^+$
and the shape of $\check{\varphi}(P,Q)$
is $\lambda\cup \Set{(r_2(P),r_2'(P))}$. 
\end{remark}
\begin{example}
Consider $P'$ and $Q$ in Examples \ref{example:P} and \ref{example:Q}.
Since $r_1(P')=f_1(Q)=2$ and
$r_2(P')=f_2(Q)=4$,
the pair $(P',Q)$ is in ${\check{\RRR}}_\lambda$.
To calculate $\check{\varphi}(P',Q)$,
we add a null box 
to the cocorner $(r_2(P'),r_2'(P'))=(4,5)$ of $Q$.
Then we obtain 
the following tableau by the reverse jeu de taquin:
\begin{align*}
\Tableau[Y]{1&1&2&4&5&5 \\ 1&2&\tf&4&4&6\\2&2&\textbf{3}&6&7\\3&4&\textbf{6}&\textbf{7}&\tf \textbf{9}\\ 5 }.
\end{align*}
Finally we put the alphabet $3$,
which lies bellow the null box, 
into the null box to obtain
\begin{align*}
 \check{\varphi}(P',Q)=\Tableau[Y]{1&1&2&4&5&5 \\ 1& 2&\tf \textbf{3} &4&4&6\\2&2&\textbf{3}&6&7\\3&4&\textbf{6}&\textbf{7}&\tf \textbf{9}\\ 5 }.
\end{align*}
\end{example}
\begin{definition}
For $(P,Q) \in {\check{\RRR}}_{\lambda}$,
we define $\check{\Phi}(P,Q)$ by
%\begin{align*}
$\check{\Phi}(P,Q)=(R(P),\check{\varphi}(P,Q))$.
%\end{align*}
\end{definition}

Next we define a map $\hat{\varphi}$ from ${\hat{\RRR}}_\lambda$ to $\PPP^+$.
\begin{algorithm}
\label{algorithm:hatvarphi}
 For $(P,Q)\in{\hat{\RRR}}_\lambda$,
we define $\hat{\varphi}(P,Q)\in\PPP^+$ 
in the following manner:
\begin{enumerate}
\item 
      \label{def:F2:alg:step:pickupuniqelm}
      Let $x$ be the alphabet in $P_{f_2(Q),f_2'(Q)}$.
\item Let $P^{(2)}$ be the tableau obtained from $P$
      by removing the corner $(f_2(Q),f_2'(Q))$.
\item 
      \label{def:F2:alg:step:revbumping}
      Insert the alphabet $x$ into $P^{(2)}$
      by the reverse row insertion from the $(f_2(Q)-1)$-th row
      until some number $y$ is bumped out from the $(f_1(Q)+1)$-th row.
      Let $P^{(3)}$ be the resulting tableau.
\item 
      \label{def:F2:alg:step:findpos}
      Find $j$ satisfying the following:
      \begin{enumerate}
       \item if $j'\leq j$ and $y' \in P^{(3)}_{f_1(Q),j'}$, then
	     $y'<y$; and
       \item if $j<j''$ and  $y'' \in P^{(3)}_{f_1(Q),j''}$, then
	     $y \leq y''$.
      \end{enumerate}
\item Define $(f_3(P,Q),f_3'(P,Q))$ to be the box $(f_1(Q),j)$.
\item Define $\hat{\varphi}(P,Q)$ to be the tableau
      obtained from $P^{(3)}$
      by appending the alphabet $y$ into the box $(f_3(P,Q),f_3'(P,Q))$.
%  \item Let $(P,Q)\in {\hat{\RRR}}_\lambda$.
% The box $(f_2(Q),f_2'(Q))$ is a corner of $\lambda$
% and $\numof{P_{f_2(Q),f_2'(Q)}}$ equals one.
% Let $x$ be the alphabet in $P_{f_2(Q),f_2'(Q)}$.
% Remove the box $(f_2(Q),f_2'(Q))$ of $P$.
% Since $r_1(P)\leq f_1(Q)$,
% we can regard  $P$ as an integer-valued column-strict tableau
% if we consider only the boxes from 
% the $(f_1(Q)+1)$-th row to the $(f_2(Q)-1)$-th row.
% Insert the alphabet $x$
% into $P$ by the reverse row bumping algorithm from the $(f_2(Q)-1)$-th row
% until some number $y$ is bumped out from the $(f_1(Q)+1)$-th row.
% Find $j$ satisfying
% \begin{align*}
% \begin{cases}
% y'<y  & (\text{if $j'\leq j$ and $y' \in P_{f_1(Q),j'}$})\\
% y \leq y'' & (\text{if $j<j''$ and  $y'' \in P_{f_1(Q),j''}$}).
% \end{cases}
% \end{align*}
% Define $(f_3(P,Q),f_3'(P,Q))$ to be the box $(f_1(Q),j)$.
% Append the alphabet $y$ into the box $(f_3(P,Q),f_3'(P,Q))$ of $P$.
% Define $\hat{\varphi}(P,Q)$ to be the resulting tableau.
\end{enumerate}
\end{algorithm}
\begin{remark}
For $(P,Q)\in {\hat{\RRR}}_\lambda$,
 the box $(f_2(Q),f_2'(Q))$ is a corner of $\lambda$
 and $\numof{P_{f_2(Q),f_2'(Q)}}$ equals one.
Hence $x$ in Step \ref{def:F2:alg:step:pickupuniqelm}
is unique.
\end{remark}
\begin{remark}
 Since $r_1(P)\leq f_1(Q)$,
 we can regard  $P$ as an integer-valued column-strict tableau
 if we consider only the boxes from 
 the $(f_1(Q)+1)$-th row to the $(f_2(Q)-1)$-th row.
Hence Step \ref{def:F2:alg:step:revbumping} 
is well-defined.
\end{remark}
\begin{remark}
\label{remark:hatphi}
%\label{remark:hatphi:r}
%\label{remark:hatvarphi:f2r2}
Roughly speaking,
 $(f_3(P,Q),f_3'(P,Q))=(f_1(Q),j)$ is the unique box in the $f_1(Q)$-th row
such that
\begin{enumerate}
 \item  $y$ will be the maximum in the box; and 
 \item  the resulting tableaux will be column-strict if we insert $y$ into the box.
\end{enumerate}
If $\numof{P_{f_1(Q),j'}}=1$ for all $j'$,
then we can find  $j$ in Step \ref{def:F2:alg:step:findpos}
similarly to the case of the reverse bumping algorithm 
for integer-valued column-strict tableaux.
In the case where
$r_1(P)<f_1(Q)$,
$\numof{P_{f_1(Q),j'}}=1$ for all $j'$.
Hence we can find $j$ in Step \ref{def:F2:alg:step:findpos}.
On the other hand,
in the case where $r_1(P)=f_1(Q)$ and $r_2(P)> f_2(Q)$,
there exists $j$ such that $\numof{P_{f_1(Q),j}}>1$.
Since $r_2(P)> f_2(Q)$,
it follows from Lemma \ref{lemma:bumpingroute1} 
that
$\max(P_{f_1(Q),f_1'(Q)})<y$.
Since $\numof{P_{f_1(Q),j'}}=1$ for all $j'>f_1'(Q)$,
we can find $j$ in Step \ref{def:F2:alg:step:findpos}.
\end{remark}

\begin{example}
Consider $P$ and $Q$ in Examples \ref{example:P} and \ref{example:Q}.
Since $r_1(P)=f_1(Q)=2$ and
$r_2(P)=5>4=f_2(Q)$,
the pair $(P,Q)$ is in ${\hat{\RRR}}_\lambda$.
To calculate $\hat{\varphi}(P,Q)$,
we remove the corner $(f_2(Q),f_2'(Q))=(4,4)$
of $P$
by the reverse row insertion until $(f_1(Q)+1)$-th row.
Then we obtain
\begin{align*}
\Tableau[Y]{1&12&2&34&4&6 \\
 23&3&34&5&6&8\\3&4&5&6&\textbf{8}\\4&6&7&\tf \\ 6 }
\end{align*}
and the alphabet $7$ bumped out from $(f_1(Q)+1)$-th row.
Moreover, by inserting the alphabet $7$
into  $f_1(Q)$-th row,
we obtain
\begin{align*}
\hat{\varphi}(P,Q)=
\Tableau[Y]{1&12&2&34&4&6 \\ 23&3&34&5&\tf 6\textbf{7}&8\\3&4&5&6&\textbf{8}\\4&6&7\\ 6 }.
\end{align*}
\end{example}

\begin{definition}
For $(P,Q)\in {\hat{\RRR}}_\lambda$,
we define $\hat{\Phi}(P,Q)$ by
%\begin{align*}
$\hat{\Phi}(P,Q)=(\hat{\varphi}(P,Q),F(Q))$.
%\end{align*}

% Moreover
% we define a map $\varphi$ 
% from $\bigcup_{\lambda\in\YY}\RRR^+_{\lambda}$
% to $\bigcup_{\lambda\in\YY}\RRR^+_{\lambda}$
% by
% \begin{align*}
%  \varphi(P,Q) =
% \begin{cases}
%  \check{\Phi}(P,Q) & (\text{if $(P,Q)\in {\check{\RRR}}$}),\\
%  \hat{\Phi}(P,Q) & (\text{if $(P,Q)\in {\hat{\RRR}}$}).
% \end{cases}
% \end{align*}
\end{definition}

\section{Main results}
\label{sec:main}

% Here we define our bijection and state main results.
% First we use $R$, $\check{\varphi}$, $F$ and $\hat{\varphi}$ defined in Section \ref{sec:defofbij}
% to define the bijections $\check{\Phi}$ and $\hat{\Phi}$.

For  a nonnegative integer $w$,
we define  ${\check{\RRR}}(w)$, ${\hat{\RRR}}(w)$, $\RRR^+(w)$ and $\RRR(w)$ by
\begin{align*}
{\check{\RRR}}(w)=\bigcup_{\lambda\in\YY\colon \lambda_1=w} {\check{\RRR}}_\lambda,\\ 
 {\hat{\RRR}}(w)=\bigcup_{\lambda\in\YY\colon \lambda_1=w} {\hat{\RRR}}_\lambda, \\
 \RRR^+(w)=\bigcup_{\lambda\in\YY\colon \lambda_1=w} \RRR^+_\lambda,\\  
 \RRR(w)=\bigcup_{\lambda\in\YY\colon \lambda_1=w} \RRR_\lambda.
\end{align*}
Then we have the following theorems.
\begin{theorem}[Main theorem]
\label{thm:main:bij}
For $(P,Q)\in {\check{\RRR}}(w)$,
define $\check\Phi_w(P,Q)$ to be $\check{\Phi}(P,Q)$.
Then $\check\Phi_w$
is a bijection from ${\check{\RRR}}(w)$ to ${\hat{\RRR}}(w)$
satisfying
\begin{align*}
 \mu(P)&=\mu(\hat P),&
 \nu(Q)&=\nu(\hat Q),&
 \varepsilon(P)-1&=\varepsilon(\hat P),&
 \delta(Q)+1&=\delta(\hat Q)
\end{align*}
for $(\hat P,\hat Q)=\check{\Phi}(P,Q)$.
The inverse of $\check\Phi_w$ is defined by
$\check\Phi_w^{-1}(P,Q)=\hat\Phi(P,Q)$.
\end{theorem}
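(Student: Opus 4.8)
The plan is to verify three things: that $\check\Phi$ indeed lands in ${\hat{\RRR}}(w)$, that the four weight identities hold, and that $\hat\Phi$ is a two-sided inverse of $\check\Phi$. First I would record that $\check\Phi$ respects the grading by $w$. The map $R$ performs a row insertion beginning from the $(r_1(P)+1)$-th row, so it never touches the first row, and $\check{\varphi}$ only adjoins the cocorner $(r_2(P),r_2'(P))$, which sits in a row $\geq r_1(P)+1\geq 2$; hence the first-row length $\lambda_1=w$ is preserved, and both $(P,Q)$ and $\check\Phi(P,Q)$ live over diagrams with first row $w$.

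The weight identities should fall out directly from the constructions. For $\hat P=R(P)$ the alphabet $x=\max(P_{r_1(P),r_1'(P)})$ removed in Algorithm \ref{algorithm:R(P)} is immediately reinserted by the row insertion, so the multiset of alphabets is unchanged, giving $\mu(\hat P)=\mu(P)$; the only change to box cardinalities is that $\numof{P_{r_1(P),r_1'(P)}}$ drops by one while a new singleton box appears, so $\varepsilon(\hat P)=\varepsilon(P)-1$. For $\hat Q=\check{\varphi}(P,Q)$, Remark \ref{remark:varphi} shows the filled box produces exactly one new vertical equality $\hat Q_{r_3+1,r_3'}=\hat Q_{r_3,r_3'}$, hence $\delta(\hat Q)=\delta(Q)+1$; preservation of $\nu$ follows because the reverse jeu de taquin permutes entries within the sliding region without changing, for each value, the set of columns in which it occurs, and the reinserted alphabet equals the value already sitting directly below it.

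The substance of the proof is the inversion, for which I would establish the matching of the control indices. Writing $(\hat P,\hat Q)=\check\Phi(P,Q)$, the goal is the pair of identities
\begin{align*}
 f_1(\hat Q)=r_1(P),\quad (f_2(\hat Q),f_2'(\hat Q))=(r_2(P),r_2'(P)),\quad (f_3(\hat P,\hat Q),f_3'(\hat P,\hat Q))=(r_1(P),r_1'(P)).
\end{align*}
Granting these, $F(\hat Q)=Q$ because the jeu de taquin launched at the corner $(f_2(\hat Q),f_2'(\hat Q))$ retraces, in reverse, the reverse jeu de taquin used to build $\hat Q$, the two being mutually inverse by Section \ref{sec:knownalgorithm}; and likewise $\hat{\varphi}(\hat P,\hat Q)=P$ because removing the corner $(r_2(P),r_2'(P))$ of $R(P)$ and running the reverse row insertion undoes the row insertion defining $R$, returning $x$ to the box $(r_1(P),r_1'(P))$. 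The symmetric computation gives $\check\Phi(\hat\Phi(P,Q))=(P,Q)$ on ${\hat{\RRR}}(w)$, and the same index identities establish $\check\Phi(P,Q)\in{\hat{\RRR}}(w)$ by comparing $r_1(\hat P),r_2(\hat P)$ against $f_1(\hat Q)=r_1(P)$ and $f_2(\hat Q)=r_2(P)$.

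The hard part will be proving these index identities in the boundary case $r_1(P)=f_1(Q)$ and $r_2(P)=f_2(Q)$, where the defining inequalities of ${\check{\RRR}}_\lambda$ are tight and the newly created repeat competes, in the very same row $r_1(P)$, with the pre-existing repeat at column $f_1'(Q)$. Here I would use Lemma \ref{lemma:slidingroute1} to show that the reverse-jeu-de-taquin null lands far enough to the right that the new vertical equality is the maximal one, forcing $f_1'(\hat Q)=r_3'(P,Q)$; dually, in the reverse direction I would invoke Lemma \ref{lemma:bumpingroute1}, exactly as in Remark \ref{remark:hatphi}, to guarantee $\max(P_{f_1(Q),f_1'(Q)})<y$, so that the step finding $j$ in Algorithm \ref{algorithm:hatvarphi} returns $r_1'(P)$. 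Pinning down the exact landing cells of the bumping and sliding routes, and checking that they remain inside the regions where $P$ and $Q$ behave as genuine column-strict tableaux, is the crux on which both well-definedness and invertibility rest.
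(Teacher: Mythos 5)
Your proposal is correct and follows essentially the same route as the paper: preservation of the first-row length, the four weight identities read off from the constructions, the index-matching identities $f_1(\hat Q)=r_1(P)$, $(f_2(\hat Q),f_2'(\hat Q))=(r_2(P),r_2'(P))$ to get mutual inversion of $\check\Phi$ and $\hat\Phi$, and the membership claims settled in the tight case $r_1(P)=f_1(Q)$ via the sliding-route and bumping-route lemmas (the paper's Lemmas \ref{lemma:slidingroute1} and \ref{lemma:bumpingroute1}, both consequences of the Row Bumping Lemma and its jeu-de-taquin analogue). The paper packages exactly these steps as Lemmas \ref{lemma:R(P):shape}--\ref{lemma:bij2}, so you have identified the same decomposition and the same crux.
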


We can obtain the finite sum Cauchy identity
for Grothendieck polynomials
 as a corollary to the main results.
Let $(P,Q)\in{\check{\RRR}}_\lambda$ and $(\hat P,\hat Q)\in {\hat{\RRR}}_{\hat\lambda}$
satisfy $(\hat P,\hat Q)=\check{\Phi}(P,Q)$.
If follows from the equations
% \mu(P)&=\mu(\hat P),&
% \nu(Q)&=\nu(\hat Q),&
% \varepsilon(P)-1&=\varepsilon(\hat P),&
% \delta(Q)+1&=\delta(\hat Q),&
% \lambda_1&=\hat \lambda_1.
that 
\begin{align*}
(-\beta)^{\varepsilon(P)}x^{\mu(P)}\cdot
 \beta^{\delta(Q)}y^{\nu(Q)}=-1\cdot(-\beta)^{\varepsilon(\hat P)}x^{\mu(\hat P)}\cdot
 \beta^{\delta(\hat Q)}y^{\nu(\hat Q)}.
\end{align*}
Since $\RRR^+_\lambda={\check{\RRR}}_\lambda\disjointunion {\hat{\RRR}}_\lambda$,
we have the following equation
\begin{align*}
&\sum_{(P,Q)\in {\check{\RRR}}(w)}
(-\beta)^{\varepsilon(P)}x^{\mu(P)}\cdot \beta^{\delta(Q)}y^{\nu(Q)}\\
=
-
&\sum_{(\hat P,\hat Q)\in {\hat{\RRR}}_{w}}
(-\beta)^{\varepsilon(\hat P)}x^{\mu(\hat P)}\cdot \beta^{\delta(\hat Q)}y^{\nu(\hat Q)},
\end{align*}
which implies
\begin{align*}
\sum_{(P,Q)\in \RRR^+(w)}
(-\beta)^{\varepsilon(P)}x^{\mu(P)}\cdot \beta^{\delta(Q)}y^{\nu(Q)}&=0.
\end{align*}
Since $(P,Q) \in \RRR^0_\lambda$
satisfies $\varepsilon(P)=0$ and $\delta(Q)=0$,
we can identified
$\RRR^0_\lambda$
with $\SSS_\lambda\times \SSS_\lambda$.
Therefore it follows that
\begin{align*}
 \sum_{\substack{\lambda\in\YY,\\ \lambda_1=w}}
 G_\lambda(\beta,x)\cdot g_\lambda(\beta,y)
&= 
\sum_{(P,Q)\in \RRR(w)}
(-\beta)^{\varepsilon(P)}x^{\mu(P)}\cdot \beta^{\delta(Q)}y^{\nu(Q)}\\
&= 
\sum_{(P,Q)\in \RRR^0(w)}
(-\beta)^{\varepsilon(P)}x^{\mu(P)}\cdot \beta^{\delta(Q)}y^{\nu(Q)}\\
&= 
\sum_{(P,Q)\in \RRR^0(w)}
x^{\mu(P)}\cdot y^{\nu(Q)}
=
\sum_{\substack{\lambda\in\YY,\\ \lambda_1=w}}
s_\lambda(x)\cdot s_\lambda(y).
\end{align*}

\section{Proof}
\label{sec:proof}
Here we show the main theorem.
First 
we recall a lemma for a bumping route.
See e.g. Fulton \cite[p.\ 9. the Row Bumping Lemma]{MR1464693} for the detail.
\begin{lemma}
 \label{lemma:bumpingroute2}
Let $T$ be an integer-valued column-strict tableau of shape $\lambda$.
First insert an alphabet $x$ into $T$ by the row bumping algorithm.
Let $T'$ be the resulting tableau, and $(i',j')$ a  new box.
Next insert an alphabet $y$ into $T'$ 
from by the row bumping algorithm.
Let $(i'',j'')$ a  new box.
\begin{enumerate}
 \item If $x\leq y$, then $i'\geq i''$ and $j' < j''$.
 \item If $x> y$, then $i'<i''$ and $j'\geq j''$.
\end{enumerate}
\end{lemma}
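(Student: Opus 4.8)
The plan is to prove both statements simultaneously by induction on the rows, tracking the two bumping routes as they descend. First I would fix notation: write $R=((1,p_1),(2,p_2),\dots,(i',j'))$ for the bumping route of the first insertion of $x$ into $T$, where $u_1=x$ is placed in column $p_1$ of row $1$ and the bumped entry $u_2$ is inserted into row $2$, and so on, with $u_k$ the alphabet entering row $k$; similarly write $R'=((1,q_1),\dots,(i'',j''))$ for the route of inserting $y=v_1$ into $T'=T\leftarrow x$, with $v_k$ the alphabet entering row $k$. I would record at the outset the standard fact that the columns of a single bumping route weakly decrease down the rows, i.e. $p_1\ge p_2\ge\cdots$ and $q_1\ge q_2\ge\cdots$, since this converts a column inequality at one row into a column inequality at all lower rows.

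The core is the following inductive claim, maintained for every row $k$ through which both routes pass: in case $x\le y$ one has $p_k<q_k$ and $u_k\le v_k$, while in case $x>y$ one has $q_k\le p_k$ and $u_k>v_k$. For the base case $k=1$ this is an elementary one-row computation: if $x\le y$, then after inserting $x$ the entry in column $p_1$ of $T'$ equals $x\le y$, so the search for the landing spot of $y$ must pass that column, forcing $q_1>p_1$, and the bumped entries satisfy $u_2\le v_2$ because $v_2$ sits weakly to the right of $u_2$ in a weakly increasing row; the case $x>y$ is the mirror computation. For the inductive step I would use that, as long as the $x$-route is still active at row $k$, row $k$ of $T'$ differs from row $k$ of $T$ only in column $p_k$, where $u_k$ now sits; inserting $v_k$ into this modified row then reproduces the base-case comparison one row further down, yielding $p_{k+1}<q_{k+1}$, $u_{k+1}\le v_{k+1}$ (resp. the mirror inequalities). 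This is the one place where the two insertions genuinely act on different tableaux, so keeping straight which entries have been overwritten is the main point of care.

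Finally I would translate the maintained inequalities into the stated conclusions on the two new boxes by analysing termination. In the case $x\le y$, the route $R'$ stays strictly to the right of $R$; when $R$ terminates at row $i'$ by appending a box at column $j'=p_{i'}$, the route $R'$, if it reaches row $i'$ at all, is forced to a column exceeding $p_{i'}$ and can only append one step further right, so $i''=i'$ and $j''>j'$; if instead $R'$ has already appended in a higher row $i''<i'$, then weak monotonicity of the columns of $R$ gives $j'=p_{i'}\le p_{i''}<q_{i''}=j''$. Either way $i'\ge i''$ and $j'<j''$. In the case $x>y$ the symmetric analysis applies: at the row $i'$ where $R$ appends, the overwritten value $u_{i'}>v_{i'}$ guarantees that $R'$ still finds an entry to bump and must continue strictly lower, so $i''>i'$, while weak monotonicity of the columns of $R'$ together with $q_{i'}\le p_{i'}=j'$ gives $j''\le q_{i'}\le j'$. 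I expect this termination/boundary step to be the delicate part; the interior induction is routine once the bookkeeping of the modified rows is set up.
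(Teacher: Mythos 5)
Your proof is correct. Note that the paper does not actually prove this lemma: it is the classical Row Bumping Lemma, quoted with a citation to Fulton's book, so there is no in-paper argument to compare against. What you have written is essentially the standard textbook proof (the one in Fulton): the row-by-row induction maintaining the column inequality ($p_k<q_k$, resp.\ $q_k\le p_k$) together with the inequality on the bumped entries ($u_k\le v_k$, resp.\ $u_k>v_k$), followed by the termination analysis using the weak decrease of columns along a single route. Your handling of the boundary cases is sound in both directions: for $x\le y$, if $R'$ survives to row $i'$ then every entry of that row of $T'$ is at most $u_{i'}\le v_{i'}$, forcing an append at column $j'+1$; for $x>y$, the entry $u_{i'}>v_{i'}$ sitting at column $p_{i'}$ of row $i'$ of $T'$ guarantees $v_{i'}$ bumps something, so $R'$ continues past row $i'$.
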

This lemma implies following:
\begin{lemma}
 \label{lemma:bumpingroute1}
Let $T$ be an integer-valued column-strict tableau of shape $\lambda$.
First insert an alphabet $x$ into $T$ from the $i$-th row 
by the row bumping algorithm.
Let $(i',j')$ be the new box of the resulting tableaux.
Next remove a corner $(i'',j'')$ of the original tableau $T$,
 insert the alphabet $T_{i'',j''}$ into it from the $(i''-1)$-th row
by the reverse bumping algorithm,
repeat the reverse bumping algorithm until an alphabet is bumped out
 from the $i$-th row.
Let $y$ be the alphabet bumped out from the $i$-th row.
If $i''<i'$, then $x<y$.
\end{lemma}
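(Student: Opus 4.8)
The plan is to reduce the statement to the Row Bumping Lemma (Lemma~\ref{lemma:bumpingroute2}) by realizing the truncated reverse insertion as the inverse of a forward insertion that starts from the $i$-th row, and then comparing two consecutive forward insertions started from the same row.

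First I would record the basic fact that a forward row-insertion started at the $i$-th row and a reverse row-insertion that terminates once an alphabet is bumped out of the $i$-th row are mutual inverses. This is just the row-by-row inverse relationship between the bumping and reverse bumping algorithms, applied only to the boxes in rows $i, i+1, \ldots$, since neither process touches rows $1, \ldots, i-1$. Accordingly, let $U$ denote the tableau produced from $T$ by the reverse insertion in the statement, that is, by removing the corner $(i'',j'')$, reverse-bumping upward, and extracting the alphabet $y$ from the $i$-th row. Then forward-inserting $y$ into $U$ from the $i$-th row recovers $T$ and creates the new box $(i'',j'')$.

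Now observe that the two operations, first inserting $y$ into $U$ and then inserting $x$ into the resulting tableau $T$, are two consecutive forward insertions both started from the $i$-th row, producing new boxes $(i'',j'')$ and $(i',j')$ respectively. Since rows $1, \ldots, i-1$ are never touched, these are standard row insertions into the column-strict tableau formed by rows $i, i+1, \ldots$ of shape $(\lambda_i, \lambda_{i+1}, \ldots)$, so Lemma~\ref{lemma:bumpingroute2} applies verbatim with first-inserted alphabet $y$ and second-inserted alphabet $x$.

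Finally I would apply the lemma in contrapositive form. Its first case asserts that if $y \leq x$ then the first new box lies weakly below the second, i.e.\ $i'' \geq i'$. Since we are given $i'' < i'$, this case is excluded, so we must be in the second case, which forces $y > x$, that is $x < y$, as claimed. The only point requiring care is the first step: checking that the truncated reverse insertion is exactly inverse to a forward insertion started at the $i$-th row, and that restricting attention to the rows $i, i+1, \ldots$ leaves the bumping routes, and hence the hypotheses of Lemma~\ref{lemma:bumpingroute2}, unaffected.
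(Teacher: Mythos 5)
Your proposal is correct and follows essentially the same route as the paper: both realize the truncated reverse insertion as the inverse of a forward insertion of $y$ from the $i$-th row, view the two operations as successive row insertions into the tableau of rows $i, i+1, \ldots$, and conclude by the contrapositive of Lemma~\ref{lemma:bumpingroute2}. The paper's proof is just a terser version of the same argument.
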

\begin{proof}
Let $T'$ be the tableaux obtained from $T$ by inserting the alphabet $x$
 by the row bumping algorithm,
and $T''$ the tableaux obtained from $T$ by removing the alphabet $y$
by the reverse bumping algorithm.
If we consider only rows from the $i$-th to $i'$-th row, then 
we can regard the tableaux $T''$, $T$ and $T'$
as resulting tableaux of two successive row bumping.
Hence we have the lemma by Lemma \ref{lemma:bumpingroute2}.
\end{proof}

We also show lemmas for a sliding route.
The following lemma follows from the definition of the sliding
and the reverse sliding.
\begin{lemma}
 \label{lemma:jdt1}
 Let $T$ be an integer-valued column-strict tableau.
The following are equivalent:
\begin{enumerate}
 \item If we put a null box at $(i,j)$ of $T$
       and slide it by the jeu de taquin,
       then the null box moves to the box $(i+1,j)$.        
 \item If we put a null box at $(i+1,j+1)$ of $T$
       and slide it by the reverse jeu de taquin,
       then the null box moves to the box $(i,j+1)$.        
\end{enumerate} 
\end{lemma}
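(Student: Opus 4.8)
The plan is to reduce each of the two conditions to one and the same inequality between two entries of $T$, after which the equivalence is immediate. The inequality in question is
\[
T_{i+1,j}\le T_{i,j+1},
\]
comparing the entry directly below the box $(i,j)$ with the entry directly to its right.

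First I would unfold condition~(1). Putting a null box at $(i,j)$ and applying one step of the Sliding algorithm with $(i_0,j_0)=(i,j)$, the null box is sent downward onto $(i+1,j)$ exactly when the second case of that algorithm is selected, i.e.\ when $T_{i+1,j}\le T_{i,j+1}$ (or $(i,j+1)$ carries no box); otherwise the first case sends it rightward onto $(i,j+1)$. So condition~(1) amounts to the displayed inequality. Next I would unfold condition~(2) in the mirror-image way. Putting a null box at $(i+1,j+1)$ and applying one step of the Reverse sliding algorithm with $(i_1,j_1)=(i+1,j+1)$, the null box is sent upward onto $(i,j+1)$ exactly when the second case is selected. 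Here $j_1=j+1\ge 2$, so the clause $j_1=1$ is inert and this case is selected precisely when $T_{i_1-1,j_1}=T_{i,j+1}\ge T_{i+1,j}=T_{i_1,j_1-1}$; the complementary first case (whose clause $i_1=1$ is likewise inert since $i_1=i+1\ge 2$) sends the null box leftward instead. Thus condition~(2) amounts to $T_{i,j+1}\ge T_{i+1,j}$, which is the same inequality, and combining the two equivalences proves the lemma.

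The only real obstacle is bookkeeping at the boundary: the ``or $T$ has no box'' alternatives in the Sliding algorithm and the $i_1=1$, $j_1=1$ alternatives in the Reverse sliding algorithm. The reverse-sliding clauses cause no trouble, since $i,j\ge 1$ force $i_1,j_1\ge 2$. The forward ``no box'' clause must be reconciled with the requirement that condition~(2) be meaningful at all: to place a null box at $(i+1,j+1)$ one needs $(i+1,j+1)\in T$, and because $T$ has Young-diagram shape this forces $(i,j+1)$, $(i+1,j)$ and $(i,j)$ all to lie in $T$. In this non-degenerate setting both entries $T_{i+1,j}$ and $T_{i,j+1}$ exist, the ``no box'' escape in the second case of the Sliding algorithm is never taken, and the plain inequality governs both conditions simultaneously. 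I would state this observation explicitly before running the reduction, so that each condition is genuinely equivalent to $T_{i+1,j}\le T_{i,j+1}$ and the desired equivalence follows.
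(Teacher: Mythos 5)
Your proof is correct and is exactly the argument the paper intends: the paper gives no explicit proof, stating only that the lemma ``follows from the definition of the sliding and the reverse sliding,'' and your unfolding of both algorithms to the single inequality $T_{i+1,j}\le T_{i,j+1}$ is precisely that reduction. Your explicit treatment of the boundary clauses is a welcome addition rather than a deviation.
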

This lemma implies the following lemmas for a sliding route.
\begin{lemma}
 \label{lemma:slidingroute1}
Let $T$ be an integer-valued column-strict tableau of shape $\lambda$.
First put a null box at $(i,j)$ in $T$,
and slide the null box to outside by the jeu de taquin.
Let $(i',j')$ be the null box after sliding.
Next put a null box at a cocorner $(i'',j'')$ of the original tableau $T$,
and slide the null box to inside by the reverse jeu de taquin
until the null box moves into the $j$-th row.
Let $(i'',j)$ be the null box after sliding.
If $j''\leq j'$, then $i''>i'$.
\end{lemma}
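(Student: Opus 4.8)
The plan is to read both operations as monotone lattice paths inside the single column-strict tableau $T$ and to prove that, under the hypothesis on the starting positions, the forward route and the reverse route do not cross; the asserted inequality between the two distinguished boxes is then just the non-crossing read off in the terminal row. Recall that a forward jeu de taquin route is weakly monotone down-and-right while a reverse route is weakly monotone up-and-left, so each route meets a given row in an interval of columns and is completely determined by its turning boxes. Hence it suffices to control, row by row, the column at which the forward route $F$ (started at $(i,j)$ and ending at the corner $(i',j')$) leaves a row, against the column at which the reverse route $R$ (started at the cocorner $(i'',j'')$ and stopped on reaching the prescribed row) enters and leaves the same row.

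The engine of the comparison is Lemma \ref{lemma:jdt1}: in the fixed tableau $T$ a forward slide moves the null from $(p,q)$ down to $(p+1,q)$ exactly when a reverse slide moves the null from $(p+1,q+1)$ up to $(p,q+1)$, and dually a forward right-move at $(p,q)$ is paired with a reverse left-move at $(p+1,q+1)$. Both alternatives are governed by the single comparison of $T_{p+1,q}$ with $T_{p,q+1}$, and this is precisely what forbids $F$ and $R$ from passing through a common box in incompatible directions. First I would anchor an induction using only the shape of $\lambda$: since the relevant starting rows satisfy $i''\le i'$ and $\lambda$ has weakly decreasing row lengths, we get $\lambda_{i''}\ge\lambda_{i'}=j'$, so the cocorner column $j''=\lambda_{i''}+1$ lies strictly to the right of every box of $F$ in row $i''$. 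I would then induct upward from row $i''$ toward row $i$, maintaining the invariant that in each such row $R$ stays strictly to the right of $F$: were $R$, on climbing one row, about to enter a column already occupied by $F$, the two routes would meet in a common $2\times2$ block, and Lemma \ref{lemma:jdt1} together with column-strictness of $T$ would force $F$ to have turned the other way there, a contradiction. Carrying the invariant down to the terminal row yields the strict inequality between the distinguished columns, which is the content of the statement.

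The part I expect to be the main obstacle is the boundary bookkeeping, not the generic interior step. The local rule of Lemma \ref{lemma:jdt1} is clean only when the surrounding $2\times2$ block lies inside $\lambda$; at the rim one must instead invoke the degenerate clauses of the (reverse) sliding algorithm, namely the cases ``no box below,'' ``no box to the right,'' and ``no box to the left,'' and check that the invariant survives each of them. The two routes also terminate for different reasons: $F$ exits as a genuine corner of $\lambda$, whereas $R$ is halted artificially upon reaching the target row, so I would have to verify that these two stopping conditions are compatible with the ordering and do not allow $R$ to slip past $F$ at its very last step. Once these edge configurations are dispatched the induction closes. An essentially equivalent alternative would be to use Lemma \ref{lemma:jdt1} to replace the reverse route $R$ by a forward route shifted by $(-1,-1)$ and then compare two successive forward slides exactly as in the proof of Lemma \ref{lemma:bumpingroute1}; I prefer the direct non-crossing argument because it keeps the delicate edge analysis visible rather than hiding it inside the shift.
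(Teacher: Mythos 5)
Your overall strategy is the one the paper uses: the corner/cocorner geometry of $\lambda$ fixes the relative starting positions of the two routes, and Lemma \ref{lemma:jdt1} supplies the local duality that drives a row-by-row comparison; the paper's own proof consists of exactly those two sentences and leaves all the bookkeeping you describe implicit. You have also correctly reconstructed from the later applications what the statement is really meant to assert (the operative hypothesis is $i''\le i'$, equivalently $j''>j'$, and the operative conclusion is that the reverse route reaches row $i$ strictly to the right of column $j$), which the printed statement garbles.

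There is, however, a step in your plan that would fail as written. The invariant ``in each row $R$ stays strictly to the right of $F$'' is too strong, and the supporting claim that the two routes cannot meet in a common box (``the two routes would meet in a common $2\times 2$ block \dots\ a contradiction'') is false: the routes can share boxes. Take $T$ of shape $(3,2)$ with first row $1,2,3$ and second row $2,9$, and slide forward from $(i,j)=(1,1)$: since $T_{2,1}=2\le 2=T_{1,2}$ the route is $(1,1)\to(2,1)\to(2,2)$, so $(i',j')=(2,2)$. Now reverse-slide from the cocorner $(i'',j'')=(2,3)$, so $i''=2\le 2=i'$; since $T_{1,3}=3<9=T_{2,2}$ the null box first moves \emph{left} onto $(2,2)$, a box of the forward route, and only then moves up to $(1,2)$. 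The lemma's conclusion still holds ($2>1=j$), but your invariant already breaks in row $2$, and no contradiction is available there. The repair stays inside your framework: the quantity to control is not the rightmost box of $F$ in row $p$ but the column $a_p$ at which $F$ \emph{enters} row $p$ by a downward move from $(p-1,a_p)$. If $R$ ever sits at $(p,a_p+1)$ with $p>i$, then Lemma \ref{lemma:jdt1} applied at $(p-1,a_p)$ --- where $F$ moved down, so $T_{p,a_p}\le T_{p-1,a_p+1}$ --- forces $R$ to move up rather than left; since the $a_p$ increase weakly along the forward route, the invariant ``$R$'s column in row $p$ exceeds $a_p$'' propagates up to row $i$, where $a_i=j$. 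Finally, note that your closing alternative of comparing two successive forward slides would be circular here, since the paper derives that comparison (Lemma \ref{lemma:slidingroute2}) from the present lemma.
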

\begin{proof}
The box $(i',j')$ is a corner of $\lambda$
and $(i'',j'')$ is a cocorner of $\lambda$.
Hence, if $j''\leq j'$, then $i'<i''$.
Therefore, applying Lemma \ref{lemma:jdt1},
we have the lemma.
\end{proof}

\begin{lemma}
 \label{lemma:slidingroute2}
Let $T$ be an integer-valued column-strict tableau of shape $\lambda$.
First put a null box at $(i,j)$ in $T$,
and slide the null box to outside by the jeu de taquin.
Let $T'$ be a resulting tableau, and $(i_1,j_1)$ be the null box.
Next put a null box at $(i,j')$ in $T'$,
and slide the null box to outside by the jeu de taquin.
Let $T''$ be a resulting tableau, and $(i_2,j_2)$ be the null box.
If $j'<j$, then
$i_1 \leq i_2$ and $j_1 > j_2$.
\end{lemma}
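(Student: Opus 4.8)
The plan is to compare the two sliding routes directly and show that they do not cross. Write $R_1$ for the sliding route of the first slide, which starts at $(i,j)$ in $T$ and ends at the corner $(i_1,j_1)$, and $R_2$ for the sliding route of the second slide, which starts at $(i,j')$ in $T'$ and ends at $(i_2,j_2)$. By the sliding algorithm each route is a lattice path that at every step moves either one box to the right or one box down; consequently, within each row a route occupies a contiguous block of columns, and its column index is weakly increasing as the row index increases. Since the two starting boxes lie in the same row $i$ with $j'<j$, the route $R_2$ starts strictly to the left of $R_1$, and both routes only move down, so $i_2\ge i$.

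The heart of the argument is the following non-crossing claim: in every row visited by both routes, $R_2$ lies strictly to the left of $R_1$, that is, every column occupied by $R_2$ is strictly smaller than every column occupied by $R_1$. I would prove this by induction on the steps of the second slide, tracking the position of its null box relative to $R_1$. As long as the null box of the second slide and the two boxes it compares (the one below it and the one to its right) all lie strictly to the left of $R_1$, the tableau $T'$ agrees there with $T$, the local decision is governed by the original entries of $T$, and the null box cannot jump across $R_1$. The delicate case is when the second null box runs immediately alongside $R_1$, because there the relevant entries of $T'$ were displaced by the first slide: each box $(r,c)$ of $R_1$ holds in $T'$ either $T_{r,c+1}$ or $T_{r+1,c}$, according as the first slide passed through it rightward or downward. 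Using this explicit description of $T'$ along $R_1$ together with the column-strictness of $T$ (and, where convenient, Lemma \ref{lemma:jdt1} to relate the two slides), I would verify that the local rule always sends the second null box downward rather than into or past $R_1$. This adjacency analysis is the main obstacle, since it is exactly the point where the two slides interact.

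Granting the non-crossing claim, both inequalities follow from the corner structure of the shapes. Since $(i_1,j_1)$ is a corner of $\lambda$, we have $\lambda_{i_1}=j_1$ and $\lambda_{i_1+1}<j_1$. Suppose first that $R_2$ terminated strictly above row $i_1$, say at $(i_2,j_2)$ with $i_2<i_1$. Then $(i_2,j_2)$ would be a corner of the shape of $T'$, and because the removed box lies in row $i_1\neq i_2$ this forces $j_2=\lambda_{i_2}\ge\lambda_{i_1}=j_1$; but row $i_2$ is also visited by $R_1$, where the column of $R_1$ is at most $j_1$, so the non-crossing claim gives $j_2<j_1$, a contradiction. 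Hence $i_2\ge i_1$. For the column inequality, if $i_2=i_1$ then the non-crossing claim applied in the common row $i_1$ gives $j_2$ strictly smaller than the columns of $R_1$ there, hence $j_2<j_1$, while if $i_2>i_1$ then $j_2\le\lambda_{i_2}\le\lambda_{i_1+1}<j_1$ by the shape constraint. In either case $i_1\le i_2$ and $j_1>j_2$, which is the assertion of the lemma.
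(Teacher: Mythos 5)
Your argument has a genuine gap at its center, and the intermediate claim it rests on is false as stated. First, the step you yourself call ``the main obstacle'' --- checking that when the second null box runs alongside $R_1$ the local rule never sends it ``into or past'' $R_1$ --- is precisely the content of the lemma, and you do not carry it out; announcing that you ``would verify'' it leaves the proof incomplete. Second, and more seriously, the strict non-crossing claim cannot be verified because it is wrong: the two routes can overlap. Take $\lambda=(4,4)$ with $T$ having first row $1,2,3,5$ and second row $8,8,8,9$, and slide from $(i,j)=(1,2)$. The null box moves right twice (since $8>3$ and $8>5$) and then down, so $R_1=\{(1,2),(1,3),(1,4),(2,4)\}$, $(i_1,j_1)=(2,4)$, and $T'$ has rows $1,3,5,9$ and $8,8,8$. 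Sliding from $(i,j')=(1,1)$ in $T'$, the null box again moves right twice (since $8>3$ and $8>5$) and then down, so $R_2=\{(1,1),(1,2),(1,3),(2,3)\}$ and $(i_2,j_2)=(2,3)$. The two routes share $(1,2)$ and $(1,3)$, so in row $1$ it is false that every column of $R_2$ is strictly smaller than every column of $R_1$, even though the conclusion of the lemma holds here ($i_1\le i_2$ and $j_1>j_2$). Any true version of the non-crossing statement must be weak --- for instance, that the second route descends from each common row weakly to the left of where the first route descends --- and then your concluding case $i_2=i_1$, which invokes strictness in the common row, no longer goes through as written. (That case can be repaired without any non-crossing at all: once $i_2\ge i_1$ is known, $j_2<j_1$ follows purely from the shape, since $(i_1,j_1)$ has already been deleted when the second slide terminates, so row $i_1$ of the shape of $T'$ has only $j_1-1$ boxes, and $\lambda_{i_1+1}<j_1$ because $(i_1,j_1)$ is a corner of $\lambda$.)

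The paper sidesteps the route-interaction analysis entirely: it observes that $T$ with its hole at $(i,j)$ is recovered from $T'$ by a reverse jeu de taquin starting at the corner $(i_1,j_1)$, which turns the statement into a comparison of one forward slide and one reverse slide on the same tableau $T'$; that is exactly Lemma~\ref{lemma:slidingroute1}, which in turn rests on the local equivalence of Lemma~\ref{lemma:jdt1} together with the corner/cocorner geometry. If you prefer a direct two-forward-slides argument, prove the weak non-crossing statement (by the induction you sketch, but with weak inequalities and an honest treatment of the adjacency case) and then extract the strict inequality $j_2<j_1$ from the shape as above.
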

\begin{proof}
We can obtain $T$
from $T'$ by putting a null box at $(i_1,j_1)$ 
and slide the null box by the reverse jeu de taquin.
Hence we have the lemma by Lemma \ref{lemma:slidingroute1}.
\end{proof}

Next 
we show some lemma for algorithms in Section \ref{subsec:alg:tab}.
Consider Algorithm \ref{algorithm:R(P)}.
Since the row insertion of Algorithm \ref{algorithm:R(P)}
starts from the $(r_1(P)+1)$-th row,
we have the following:
\begin{lemma}
\label{lemma:R(P):shape}
For $P\in\PPP_\lambda^+$,
then $\lambda^{(1)}_1=\lambda_1$,
where $\lambda^{(1)}$ is the shape of $R(P)$,
\end{lemma}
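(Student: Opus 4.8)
The plan is to exploit the single structural feature of Algorithm \ref{algorithm:R(P)} highlighted in the text preceding the lemma, namely that the row insertion defining $R(P)$ begins at the $(r_1(P)+1)$-th row, and to combine this with the elementary behavior of the row-insertion process, that inserting an alphabet into the $k$-th row only alters boxes in rows of index at least $k$.

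First I would record that $r_1(P) \geq 1$. Since $P \in \PPP_\lambda^+$ we have $\varepsilon(P) > 0$, so $P$ possesses at least one box containing more than one alphabet; hence $r_1(P) = \max\{i \mid |P_{i,j}| > 1 \text{ for some } j\}$ is well-defined and is at least $1$. Consequently the insertion of $x = \max(P_{r_1(P),r_1'(P)})$ starts from row $r_1(P)+1 \geq 2$.

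Next I would invoke the standard cascading property of the row-insertion. When an alphabet is inserted into the $k$-th row, any alphabet bumped out is reinserted into the $(k+1)$-th row, and so on downward; therefore no box in a row of index strictly less than $k$ is ever touched, and in particular the new box lies in a row of index at least $k$. Applying this with $k = r_1(P)+1 \geq 2$ shows that every box modified by the insertion—including the new box $(r_2(P),r_2'(P))$—sits in a row of index at least $2$, so the first row of $P$ is left completely unchanged.

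Since the first row is untouched, its length is preserved, which gives $\lambda^{(1)}_1 = \lambda_1$. I do not expect any genuine obstacle here; the only point requiring a moment of care is verifying $r_1(P) \geq 1$, and this is precisely what membership of $P$ in $\PPP_\lambda^+$ guarantees.
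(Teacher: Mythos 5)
Your argument is exactly the one the paper intends: the text immediately preceding the lemma justifies it by the single observation that the row insertion in Algorithm \ref{algorithm:R(P)} starts from the $(r_1(P)+1)$-th row, and you have simply spelled out the two supporting facts (that $r_1(P)\geq 1$ because $\varepsilon(P)>0$, and that row insertion never touches rows above its starting row). Your write-up is correct and matches the paper's reasoning, just with the details made explicit.
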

Algorithm \ref{algorithm:R(P)}
does not change boxes containing more than one alphabet
except the box $(r_1(P),r_1'(P))$.
Hence we have the following:
\begin{lemma}
\label{lemma:R(P)}
For $P\in\PPP_\lambda^+$, $r_1(R(P))\leq r_1(P)$.
Moreover, if $r_1(P)=r_1(R(P))$, then 
$r'_1(R(P))\leq r'_1(P)$.
\end{lemma}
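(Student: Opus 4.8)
The plan is to track exactly how Algorithm \ref{algorithm:R(P)} alters $P$ row by row, and then read off both inequalities directly from the definitions of $r_1$ and $r_1'$. Write $r=r_1(P)$ and $c=r_1'(P)$. By maximality of $r$, every box in a row of index $>r$ is a singleton, so the sub-tableau $\Set{(i,j)\in\lambda|i>r}$ really is an integer-valued column-strict tableau; this is the hypothesis under which the row insertion of Step (3) is carried out. The first thing I would record is the resulting structure of $R(P)=P^{(1)}$: rows $1,\dots,r-1$ are identical to those of $P$; row $r$ agrees with $P$ except that box $(r,c)$ has lost the entry $x=\max(P_{r,c})$ that the algorithm removes; and rows $r+1,r+2,\dots$ arise from inserting a single alphabet into an integer-valued tableau, hence are made up entirely of singletons. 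The key point is that the insertion propagates only downward from the $(r+1)$-th row, so it never touches a box in a row of index $\le r$, and by Lemma \ref{lemma:R(P):shape} it does not enlarge the top row.

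From this description I would extract the single fact that drives everything: the set of multi-element boxes of $R(P)$ is contained in the set of multi-element boxes of $P$. Indeed, if $\numof{R(P)_{i,j}}>1$ then $i\le r$ (rows of index $>r$ being all singletons); for $i<r$ the box is unchanged from $P$ and hence already multi-element there, while for $i=r$ the box is either unchanged from $P$ or equal to $(r,c)$, which was multi-element in $P$ before losing one entry. In every case $(i,j)$ is a multi-element box of $P$. This is exactly the informal assertion that the algorithm changes no multi-element box other than $(r,c)$.

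Both claims are then immediate. Since $r_1$ is the largest row index carrying a multi-element box and these boxes only shrank, $r_1(R(P))\le r=r_1(P)$, which is the first inequality. For the second, assume $r_1(R(P))=r$, so row $r$ of $R(P)$ still carries a multi-element box; as the multi-element boxes of $R(P)$ in row $r$ form a subset of those of $P$ in row $r$, whose rightmost column is $c=r_1'(P)$ by definition, their rightmost column satisfies $r_1'(R(P))\le r_1'(P)$.

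I expect no serious obstacle, since the argument is a containment of box-sets; the only two points I would verify explicitly are the following. First, the downward insertion into the integer-valued sub-tableau cannot create a new multi-element box below row $r$, which is just the fact that row insertion sends integer-valued column-strict tableaux to integer-valued column-strict tableaux. Second, the boundary case of the second claim when $\numof{P_{r,c}}=2$: removing $x$ then turns $(r,c)$ into a singleton, so the rightmost surviving multi-element box of row $r$ lies strictly left of $c$ and the inequality is strict, whereas for $\numof{P_{r,c}}\ge 3$ the box $(r,c)$ survives and equality may hold. Neither case violates the stated non-strict bound.
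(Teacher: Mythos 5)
Your argument is correct and is essentially the paper's own proof: the paper justifies this lemma by the single observation that Algorithm \ref{algorithm:R(P)} changes no box containing more than one alphabet except $(r_1(P),r_1'(P))$ (which can only lose an entry), the insertion affecting only the singleton rows below $r_1(P)$, and your writeup is a careful elaboration of exactly that containment of multi-element box sets. The appeal to Lemma \ref{lemma:R(P):shape} is not needed for this statement, but it is harmless.
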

Since all alphabets in $P$ are preserved,
we have the following:
\begin{lemma}
\label{lemma:mu}
For $P\in\PPP^+_{\lambda}$, $\mu(P)=\mu(R(P))$.
\end{lemma}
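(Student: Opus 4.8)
The plan is to track the global multiset of alphabets appearing in the tableau, since $\mu(P)=(\mu_1,\mu_2,\ldots)$ records exactly how many boxes contain each alphabet $k$; equivalently, $\mu_k(P)$ is the multiplicity of $k$ in the multiset obtained by collecting, with multiplicity, all alphabets across all boxes of $P$. Thus the assertion $\mu(P)=\mu(R(P))$ is equivalent to the claim that Algorithm \ref{algorithm:R(P)} leaves this global multiset unchanged. I would establish this by examining the only two steps of the algorithm that alter entries, namely the removal of $x=\max(P_{r_1(P),r_1'(P)})$ and the subsequent row insertion of the same alphabet $x$.

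First I would observe that Step 2 removes a single copy of the alphabet $x$ from the box $(r_1(P),r_1'(P))$, which decreases $\mu_x$ by one and leaves every other $\mu_k$ untouched. Here I use that this box contains more than one alphabet, so it remains nonempty after removal and the shape $\lambda$ is unchanged; the fact that $x$ is the maximum of that box is what keeps the removal compatible with row- and column-strictness. Next, Step 3 inserts $x$ by the row insertion starting from the $(r_1(P)+1)$-th row. Because every box $(i,j)$ with $i>r_1(P)$ is a singleton, by the definition of $r_1(P)$, the entire bumping route lies in the integer-valued region, and the new box is appended there as well. I would therefore appeal to the standard fact about row bumping of integer-valued column-strict tableaux: each bump replaces an entry by the incoming alphabet and pushes the displaced entry one row down, so the multiset of entries of the resulting tableau is exactly the original multiset together with one extra copy of the inserted alphabet $x$.

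Combining the two steps, the removal deletes one $x$ and the insertion restores one $x$, while every other alphabet is merely relocated within the singleton region and hence contributes to the same number of boxes before and after. The net change to the global multiset is zero, giving $\mu_k(P)=\mu_k(R(P))$ for every $k$, and hence $\mu(P)=\mu(R(P))$. The only step requiring care, and the one I would state most explicitly, is the verification that the insertion in Step 3 never re-enters row $r_1(P)$ or above, so that it really is an integer-valued row insertion to which the standard multiset-preservation fact applies; this follows because the bumping route moves strictly downward from row $r_1(P)+1$, keeping the whole process inside the singleton portion $\Set{(i,j)\in\lambda| i>r_1(P)}$ of $P$.
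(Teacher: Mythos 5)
Your proof is correct and follows essentially the same reasoning as the paper, which disposes of this lemma with the single observation that all alphabets in $P$ are preserved by Algorithm \ref{algorithm:R(P)}: the removed alphabet $x$ is re-inserted, and the row bumping in the singleton region below row $r_1(P)$ only relocates entries. Your version merely spells out the multiset bookkeeping that the paper leaves implicit.
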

Since Algorithm \ref{algorithm:R(P)} adds only one box,
 Equation \eqref{eq:sum:mu} implies
 the following:
\begin{lemma}
\label{lemma:epsilon}
For $P\in\PPP^+_{\lambda}$, $ \varepsilon(P)=\varepsilon(R(P))+1$.
\end{lemma}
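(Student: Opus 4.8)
The plan is to deduce the identity purely by a box/alphabet count, using the relation \eqref{eq:sum:mu} together with the two facts already recorded for $R$: Lemma \ref{lemma:mu} and the observation that $R$ enlarges the shape by a single box. Concretely, writing $\lambda^{(1)}$ for the shape of $R(P)$, I would apply \eqref{eq:sum:mu} to both $P$ and $R(P)$ to obtain $\varepsilon(P)=\sum_i\mu_i(P)-\numof{\lambda}$ and $\varepsilon(R(P))=\sum_i\mu_i(R(P))-\numof{\lambda^{(1)}}$, and then compare the two.

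The first ingredient is that the alphabet multiplicities are unchanged. By Lemma \ref{lemma:mu} we have $\mu(P)=\mu(R(P))$, hence $\sum_i\mu_i(P)=\sum_i\mu_i(R(P))$. This holds because Algorithm \ref{algorithm:R(P)} only removes the alphabet $x=\max(P_{r_1(P),r_1'(P)})$ from a single box and then reinserts the same $x$ by a row insertion, so no alphabet is created or destroyed.

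The second ingredient is that the shape grows by exactly one box, i.e.\ $\numof{\lambda^{(1)}}=\numof{\lambda}+1$. Here I would check two things. First, removing $x$ from the box $(r_1(P),r_1'(P))$ does not delete that box: since $P\in\PPP^+_\lambda$, the choice of $r_1(P)$ and $r_1'(P)$ forces $\numof{P_{r_1(P),r_1'(P)}}>1$, so after removal the box is still nonempty. Second, the single row-insertion process terminates by creating exactly one new box, namely the cocorner $(r_2(P),r_2'(P))$ recorded in Step 5 of Algorithm \ref{algorithm:R(P)}. Thus the net effect on the shape is the addition of precisely one box.

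Combining these gives $\varepsilon(R(P))=\sum_i\mu_i(R(P))-\numof{\lambda^{(1)}}=\sum_i\mu_i(P)-(\numof{\lambda}+1)=\varepsilon(P)-1$, which rearranges to the claim $\varepsilon(P)=\varepsilon(R(P))+1$. Since the arithmetic is immediate, the only point requiring any care — and the nearest thing to an obstacle — is justifying that exactly one box is added and none removed; but both halves of that statement are forced by the definitions of $r_1(P)$ and $r_1'(P)$ and by the standard termination behavior of the row insertion, so no real difficulty arises.
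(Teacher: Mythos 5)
Your proof is correct and follows essentially the same route as the paper, which derives the identity in one line from Equation \eqref{eq:sum:mu}, the invariance of $\mu$ (Lemma \ref{lemma:mu}), and the fact that Algorithm \ref{algorithm:R(P)} adds exactly one box. Your additional check that the box $(r_1(P),r_1'(P))$ remains nonempty after removing $x$ is a reasonable piece of diligence the paper leaves implicit.
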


Consider Algorithm \ref{algorithm:F}.
The null box $(f_1(Q),f_1'(Q))$ always move to $(f_1(Q)+1,f_1'(Q))$ at 
the first step of the jeu de taquin.
Hence we have the following:
\begin{lemma}
\label{lemma:F(Q):shape}
For $Q\in\QQQ_\lambda^+$, 
 $\lambda^{(1)}_1=\lambda_1$,
where $\lambda^{(1)}$ is the shape of $F(Q)$.
\end{lemma}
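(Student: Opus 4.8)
The plan is to show that the box deleted by Algorithm \ref{algorithm:F}, namely the corner $(f_2(Q),f_2'(Q))$ at which the null box exits, never lies in the first row. Removing a box from any row other than the first leaves the length $\lambda_1$ of the first row untouched, which is exactly the assertion $\lambda^{(1)}_1=\lambda_1$. So everything reduces to proving $f_2(Q)\geq 2$.

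First I would record that $f_1(Q)\geq 1$ for every $Q\in\QQQ_\lambda^+$, since $f_1(Q)$ is a maximum over a nonempty set of positive row-indices. Then I would analyze the very first step of the jeu de taquin applied to the null box placed at $(f_1(Q),f_1'(Q))$. By the definitions of $f_1$ and $f_1'$ we have $Q_{f_1(Q)+1,f_1'(Q)}=Q_{f_1(Q),f_1'(Q)}$, so in particular the box $(f_1(Q)+1,f_1'(Q))$ belongs to $\lambda$. If the box $(f_1(Q),f_1'(Q)+1)$ also belongs to $\lambda$, then the reverse-plane-partition condition that rows are weakly increasing gives
\[
Q_{f_1(Q)+1,f_1'(Q)}=Q_{f_1(Q),f_1'(Q)}\leq Q_{f_1(Q),f_1'(Q)+1},
\]
so the sliding rule sends the null box downward to $(f_1(Q)+1,f_1'(Q))$; and if that right-hand box is absent, the sliding rule again sends the null box downward. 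Either way the null box lands in row $f_1(Q)+1\geq 2$ after the first step, which is the observation quoted just before the lemma.

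Finally I would invoke the monotonicity of the jeu de taquin: at every subsequent step the null box moves either one column to the right or one row down, and never upward or leftward. Hence, once it is in a row $\geq 2$, it remains in rows $\geq 2$, and the corner $(f_2(Q),f_2'(Q))$ where it exits satisfies $f_2(Q)\geq f_1(Q)+1\geq 2$. Deleting this corner therefore does not affect the first row, giving $\lambda^{(1)}_1=\lambda_1$. I do not expect a genuine obstacle here; the only points requiring care are confirming that the first step is truly a downward move (so that the null box leaves row $1$ at once in the boundary case $f_1(Q)=1$) and that no later step can carry it back into row $1$, both of which follow directly from the sliding algorithm together with the defining inequalities above.
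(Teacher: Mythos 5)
Your argument is correct and is essentially the paper's own: the paper justifies this lemma by the single observation that, since $Q_{f_1(Q)+1,f_1'(Q)}=Q_{f_1(Q),f_1'(Q)}\leq Q_{f_1(Q),f_1'(Q)+1}$, the null box always moves down to $(f_1(Q)+1,f_1'(Q))$ at the first sliding step, and thereafter can only move down or right, so the removed corner lies below the first row. Your write-up just fills in these details explicitly.
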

Algorithm \ref{algorithm:F} changes only the boxes in
\begin{align*}
\Set{(i,j)\in\lambda| \text{$i\geq f_1(Q)$ and $j\geq f_1'(Q)$}}.
\end{align*}
Moreover the jeu de taquin preserves column-strictness.
Hence we have the following:
\begin{lemma}
\label{lemma:F(Q)}
For $Q\in\QQQ_\lambda^+$, we have $f_1(F(Q))\leq f_1(Q)$. Moreover,
if $f_1(F(Q)) =f_1(Q)$,
then  $f'_1(F(Q)) < f'_1(Q)$,
\end{lemma}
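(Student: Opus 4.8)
The plan is to read off both assertions directly from the two facts stated just above the lemma: that Algorithm~\ref{algorithm:F} alters only the entries in the quadrant $\{(i,j)\in\lambda\mid i\geq f_1(Q),\ j\geq f_1'(Q)\}$, and that the jeu de taquin preserves column-strictness. First I would make the localization precise. Since the null box starts at $(f_1(Q),f_1'(Q))$ and the sliding moves the null box only downward or rightward, the entire sliding route is contained in that quadrant; outside it, $F(Q)$ agrees entry-by-entry with $Q$.

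Next I would record the column-strictness that we may use. By the maximality of $f_1(Q)$ there is no $i>f_1(Q)$ and $j$ with $Q_{i,j}=Q_{i+1,j}$, and by the maximality of $f_1'(Q)$ there is no $j>f_1'(Q)$ with $Q_{f_1(Q),j}=Q_{f_1(Q)+1,j}$; hence, as noted before Algorithm~\ref{algorithm:F}, $Q$ is column-strict on $\{(i,j)\in\lambda\mid i\geq f_1(Q),\ j\geq f_1'(Q)\}\setminus\{(f_1(Q),f_1'(Q))\}$, regarded as a skew tableau with a hole at the inner corner $(f_1(Q),f_1'(Q))$. Applying the jeu de taquin fills this hole and deletes the outer corner $(f_2(Q),f_2'(Q))$, and because the jeu de taquin preserves column-strictness the whole quadrant of $F(Q)$ is column-strict: for all $i\geq f_1(Q)$ and $j\geq f_1'(Q)$ with $(i,j),(i+1,j)\in\lambda^{(1)}$ we have $F(Q)_{i,j}<F(Q)_{i+1,j}$.

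With these in hand the two claims follow by checking where a vertical repeat $F(Q)_{i,j}=F(Q)_{i+1,j}$ can occur. For $f_1(F(Q))\leq f_1(Q)$ I would rule out any repeat with $i>f_1(Q)$: if $j\geq f_1'(Q)$ then both boxes lie in the quadrant and column-strictness forbids equality, while if $j<f_1'(Q)$ then both entries are unchanged, so $Q_{i,j}=Q_{i+1,j}$ with $i>f_1(Q)$, contradicting the maximality of $f_1(Q)$. For the ``moreover'', assuming $f_1(F(Q))=f_1(Q)$, the same column-strictness applied with $i=f_1(Q)$ gives $F(Q)_{f_1(Q),j}<F(Q)_{f_1(Q)+1,j}$ for every $j\geq f_1'(Q)$; thus every repeat in row $f_1(Q)$ sits in a column $j<f_1'(Q)$, and since $f_1(F(Q))=f_1(Q)$ guarantees at least one such repeat exists, its rightmost column is $<f_1'(Q)$, i.e. $f_1'(F(Q))<f_1'(Q)$.

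The only genuinely nontrivial point, which I would state carefully rather than recompute, is the assertion that after the slide the \emph{entire} quadrant of $F(Q)$—in particular the formerly-null inner corner $(f_1(Q),f_1'(Q))$ together with its column-neighbour—is column-strict. This is exactly the standard fact that the jeu de taquin carries a column-strict skew tableau to a column-strict tableau; I would also note that wherever the outer corner $(f_2(Q),f_2'(Q))$ has been removed there is simply no box below, so no repeat is created at the boundary of the deleted region.
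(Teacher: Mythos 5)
Your proposal is correct, and it follows exactly the route the paper takes: the paper derives this lemma from the same two observations (Algorithm~\ref{algorithm:F} only alters boxes in the quadrant $\Set{(i,j)\in\lambda| i\geq f_1(Q),\ j\geq f_1'(Q)}$, and the jeu de taquin preserves column-strictness), leaving the case analysis on where a vertical repeat can occur implicit. You have merely written out that case analysis in full, which is a faithful and complete elaboration of the paper's argument.
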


Next 
we show some lemma for algorithms in Section \ref{subsec:alg:pair}.
Consider Algorithm \ref{algorithm:checkvarphi}.
\begin{lemma}
\label{lemma:varphi:f}
For $(P,Q)\in\check{\RRR}_\lambda$,
we have
\begin{align*}
f_1(\check{\varphi}(P,Q)) &=r_3(P,Q)=r_1(P),\\
f_1'(\check{\varphi}(P,Q)) &=r_3'(P,Q).
\end{align*}
\end{lemma}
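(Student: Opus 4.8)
Lemma~\ref{lemma:varphi:f} asserts that after computing $\check{\varphi}(P,Q)$, the freshly placed alphabet $Q^{(1)}_{r_3(P,Q),r_3'(P,Q)}$ sits exactly at the bottom-most ``equal-column'' position of the new reverse plane partition, and that this position coincides with the $r_1(P)$-th row. The plan is to read off both equalities directly from Remark~\ref{remark:varphi}, and then to argue that no other equal-pair appears below the $r_1(P)$-th row.

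**First steps.** I would start from the two relations recorded in Remark~\ref{remark:varphi}. The reverse jeu de taquin in Step~\ref{def:E2:alg:step:jdt} halts precisely when the null box reaches the $r_1(P)$-th row, so by definition $(r_3(P,Q),r_3'(P,Q))$ lies in the $r_1(P)$-th row; this gives $r_3(P,Q)=r_1(P)$ immediately. Remark~\ref{remark:varphi} also gives
\begin{align*}
\check{\varphi}(P,Q)_{r_3(P,Q)+1,r_3'(P,Q)}
 = Q^{(3)}_{r_3(P,Q)+1,r_3'(P,Q)}
 = Q^{(1)}_{r_3(P,Q),r_3'(P,Q)}
 = \check{\varphi}(P,Q)_{r_3(P,Q),r_3'(P,Q)},
\end{align*}
since the entry just placed into the null box is exactly $Q^{(1)}_{r_3(P,Q),r_3'(P,Q)}$ and the box directly below it was left untouched by the sliding. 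Thus $(r_3(P,Q),r_3'(P,Q))$ is an equal-column pair of $\check{\varphi}(P,Q)$, which forces $f_1(\check{\varphi}(P,Q))\ge r_3(P,Q)=r_1(P)$ and, once we know this is the bottom-most such pair, $f_1'(\check{\varphi}(P,Q))=r_3'(P,Q)$.

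**The main point to verify.** The substantive work is the reverse inequality $f_1(\check{\varphi}(P,Q))\le r_1(P)$, i.e. that there is no equal-column pair strictly below the $r_1(P)$-th row. Here I would split into the two cases defining $\check{\RRR}_\lambda$. The boxes below row $r_1(P)$ in $Q$ itself are column-strict: when $r_1(P)>f_1(Q)$ this is because $f_1(Q)<r_1(P)$ means every equal pair of $Q$ sits above row $r_1(P)$; when $r_1(P)=f_1(Q)$ and $r_2(P)\le f_2(Q)$ the column-strictness below row $f_1(Q)$ holds by the remark preceding Algorithm~\ref{algorithm:F} together with Lemma~\ref{lemma:slidingroute1}, which guarantees the null box travels within the region where $Q$ is strict. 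The reverse jeu de taquin preserves column-strictness of the affected region, so the only place a new equality can be created is where the migrating alphabet finally settles, namely at $(r_1(P),r_3'(P,Q))$. I expect the bookkeeping of exactly which boxes the reverse-sliding route touches—and confirming that it stays inside the strict region so that no spurious equal pair is introduced below row $r_1(P)$—to be the main obstacle, and Lemma~\ref{lemma:slidingroute1} is the tool I would lean on to control the route.

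**Assembling the conclusion.** Once both inequalities are in place I get $f_1(\check{\varphi}(P,Q))=r_1(P)=r_3(P,Q)$. For the column index, among all equal-column pairs in row $r_1(P)$ the definition of $f_1'$ selects the right-most one; since the sliding route enters row $r_1(P)$ at column $r_3'(P,Q)$ and all entries strictly to its right in that row are inherited unchanged from $Q^{(1)}$ and are column-strict against the row below (the inequality $Q^{(1)}_{r_3,r_3'+1}\ge Q^{(1)}_{r_3,r_3'}$ from Remark~\ref{remark:varphi} prevents a later equality), the box $(r_1(P),r_3'(P,Q))$ is indeed the right-most equal pair, giving $f_1'(\check{\varphi}(P,Q))=r_3'(P,Q)$. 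This completes the three claimed equalities.
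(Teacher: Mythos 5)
Your overall strategy coincides with the paper's: establish $r_3(P,Q)=r_1(P)$ from the stopping rule of the reverse jeu de taquin, observe via Remark \ref{remark:varphi} that the final placement creates an equal column-pair at $(r_3(P,Q),r_3'(P,Q))$, note that the sliding creates no other equal pair because it takes place inside a column-strict region, and then split into the two cases defining $\check{\RRR}_\lambda$. The first two claimed equalities, and the third one in the case $r_1(P)>f_1(Q)$ (where row $r_1(P)$ of $Q$ contains no equal pair at all), come out correctly this way.

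There is, however, a genuine gap in the case $r_1(P)=f_1(Q)$ and $r_2(P)\le f_2(Q)$, for the equality $f_1'(\check{\varphi}(P,Q))=r_3'(P,Q)$. In this case row $r_1(P)=f_1(Q)$ of $Q$ already contains equal column-pairs, the right-most at column $f_1'(Q)$, and these persist into $\check{\varphi}(P,Q)$. To conclude that $(r_1(P),r_3'(P,Q))$ is the right-most equal pair in that row you must therefore show $r_3'(P,Q)>f_1'(Q)$. Your stated justification --- that the entries of row $r_1(P)$ to the right of column $r_3'(P,Q)$ are inherited unchanged and that the inequality $Q^{(1)}_{r_3,r_3'+1}\ge Q^{(1)}_{r_3,r_3'}$ ``prevents a later equality'' --- does not address this point: that inequality is just weak monotonicity along a row of a reverse plane partition and is a priori compatible with $Q_{r_1(P),j}=Q_{r_1(P)+1,j}$ for some $j>r_3'(P,Q)$. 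The paper closes exactly this step by applying Lemma \ref{lemma:slidingroute1}: comparing the forward sliding route out of $(f_1(Q),f_1'(Q))$, which ends at the corner $(f_2(Q),f_2'(Q))$, with the reverse sliding route started at the cocorner $(r_2(P),r_2'(P))$, the hypothesis $r_2(P)\le f_2(Q)$ forces the reverse route to stop in row $r_1(P)$ strictly to the right of column $f_1'(Q)$, i.e. $r_3'(P,Q)>f_1'(Q)$. You invoke Lemma \ref{lemma:slidingroute1} only for well-definedness of the sliding; it is also needed here, and without it the third equality remains unproved in this case.
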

\begin{proof}
It follows from the definition of Algorithm \ref{algorithm:checkvarphi}
that $r_3(P,Q)=r_1(P)$.
Let $\hat Q=\check{\varphi}(P,Q)$.
Since the jeu de taquin preserves column-strictness,
we have 
\begin{align*}
&\Set{(i,j)|\text{$\hat Q_{i,j}=\hat Q_{i+1,j}$ for some $j$}} \\
=&
\Set{(i,j)|\text{$Q_{i,j}=Q_{i+1,j}$ for some $j$}} 
\cup \Set{(r_3(P,Q),r_3'(P,Q))}.
\end{align*}
Since $(P,Q)\in\check{\RRR}_\lambda$, we have
\begin{enumerate}
\item $r_1(P)>f_1(Q)$; or
\item $r_1(P)=f_1(Q)$ and $r_2(P)\leq f_2(Q)$.
\end{enumerate}
First assume that $r_1(P)>f_1(Q)$.
Since $r_3(P,Q)=r_1(P)$, it follows that $r_3(P,Q)>f_1(Q)$,
which implies 
$f_1(\hat Q)=r_3(P,Q)$ and $f_1'(\hat Q)=r_3'(P,Q)$.
Next assume that $r_1(P)=f_1(Q)$ and that $r_2(P)\leq f_2(Q)$.
In this case, we have $f_1(Q)=r_1(P)=r_3(P,Q)$,
which implies $f_1(\hat Q)=r_3(P,Q)$.
Moreover, since $r_2(P)\leq f_2(Q)$,
it follows from Lemma \ref{lemma:slidingroute1} 
that  $r_3'(P,Q)>f_1'(Q)$.
Hence we have $f_1'(\hat Q)=r_3'(P,Q)$.
\end{proof}
Since the jeu de taquin is the inverse of the reverse jeu de taquin,
we have the following:
\begin{lemma}
\label{lemma:varphi:f2r2}
For $(P,Q)\in \check\RRR_\lambda$, we have
\begin{align*}
f_2(\check{\varphi}(P,Q)) &=r_2(P)\\
f_2'(\check{\varphi}(P,Q)) &=r'_2(P).
\end{align*} 
\end{lemma}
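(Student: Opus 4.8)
The plan is to exhibit the computation of $F(\check{\varphi}(P,Q))$ as literally undoing the reverse jeu de taquin performed inside Algorithm \ref{algorithm:checkvarphi}. Write $\hat Q=\check{\varphi}(P,Q)$. By Lemma \ref{lemma:varphi:f} we already know $f_1(\hat Q)=r_3(P,Q)=r_1(P)$ and $f_1'(\hat Q)=r_3'(P,Q)$, so when Algorithm \ref{algorithm:F} is applied to $\hat Q$ the null box is placed at $(r_3(P,Q),r_3'(P,Q))$, and the removed corner is exactly the pair $(f_2(\hat Q),f_2'(\hat Q))$ we must identify with $(r_2(P),r_2'(P))$.

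First I would observe that emptying the box $(r_3(P,Q),r_3'(P,Q))$ of $\hat Q$ recovers the tableau $Q^{(3)}$ of Algorithm \ref{algorithm:checkvarphi}: by construction (Step \ref{def:E2:alg:step:putalphabet}) the tableau $\hat Q$ is obtained from $Q^{(3)}$ only by inserting the alphabet $Q^{(1)}_{r_3(P,Q),r_3'(P,Q)}$ into the then-null box $(r_3(P,Q),r_3'(P,Q))$, the two tableaux agreeing elsewhere. Thus the jeu de taquin invoked by Algorithm \ref{algorithm:F} starts precisely from $Q^{(3)}$ with a null box at $(r_3(P,Q),r_3'(P,Q))$. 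On the other hand, in Step \ref{def:E2:alg:step:jdt} this same configuration was produced from $Q^{(2)}$, which carries a null box at the cocorner $(r_2(P),r_2'(P))$, by the reverse jeu de taquin.

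Now I would invoke that the jeu de taquin is the inverse of the reverse jeu de taquin. Concretely, each reverse-sliding step of Step \ref{def:E2:alg:step:jdt} is undone by a sliding step through Lemma \ref{lemma:jdt1}, so the jeu de taquin applied to $Q^{(3)}$ retraces the reverse-sliding route in the opposite order and returns the null box to $(r_2(P),r_2'(P))$, recovering $Q^{(2)}$. Because $(r_2(P),r_2'(P))$ is a cocorner of $\lambda$, it is a corner of the shape $\lambda\cup\Set{(r_2(P),r_2'(P))}$ of $\hat Q$ (Remark \ref{remark:varphi}), so the jeu de taquin halts exactly there. Hence the box removed by Algorithm \ref{algorithm:F} is $(r_2(P),r_2'(P))$, which is the desired conclusion $f_2(\hat Q)=r_2(P)$ and $f_2'(\hat Q)=r_2'(P)$.

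The step that needs the most care is matching the domains of the two sliding processes so that the step-by-step inversion is legitimate: the jeu de taquin of Algorithm \ref{algorithm:F} operates on the boxes with $i\geq f_1(\hat Q)$ and $j\geq f_1'(\hat Q)$, while Step \ref{def:E2:alg:step:jdt} performs its reverse slide within the region described in the remarks preceding Algorithm \ref{algorithm:checkvarphi}. The point I would emphasize is that a sliding route issuing from $(r_3(P,Q),r_3'(P,Q))$ only moves downward and to the right, hence stays inside $\Set{(i,j)|i\geq f_1(\hat Q),\ j\geq f_1'(\hat Q)}$, so it meets exactly the boxes visited by the reverse slide; this is what makes the inversion valid and rules out the null box either stopping short of, or running past, the cocorner $(r_2(P),r_2'(P))$.
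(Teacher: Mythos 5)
Your argument is correct and is essentially the paper's own: the paper justifies this lemma with the single observation that the jeu de taquin inverts the reverse jeu de taquin, which is exactly the mechanism you spell out (identifying the starting null box via Lemma \ref{lemma:varphi:f}, recovering $Q^{(3)}$, and retracing the slide back to the cocorner $(r_2(P),r_2'(P))$). Your version simply makes explicit the details the paper leaves implicit.
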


We obtain $\check\varphi(P,Q)$ from $Q$ by the jeu de taquin.
Since the jeu de taquin preserves column-strictness,
we have the following:
\begin{lemma}
\label{lemma:nu}
For $(P,Q)\in {\check{\RRR}}_{\lambda}$,
$ \nu(Q)=\nu(\check\varphi(P,Q))$.
\end{lemma}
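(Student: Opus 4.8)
The plan is to compare $Q$ with $\hat Q:=\check{\varphi}(P,Q)$ directly and to reduce the claim to the content-preservation of jeu de taquin. First I would record exactly how $\check{\varphi}$ alters $Q$. Writing the reverse-sliding route of Step~\ref{def:E2:alg:step:jdt} of Algorithm~\ref{algorithm:checkvarphi} as $\rho_0,\ldots,\rho_m$, with $\rho_m=(r_2(P),r_2'(P))$ the appended cocorner and $\rho_0=(r_3(P,Q),r_3'(P,Q))$ the final null box, tracing the slides gives $\hat Q_{\rho_t}=Q_{\rho_{t-1}}$ for $t\ge 1$, $\hat Q_{\rho_0}=Q_{\rho_0}$ (the alphabet restored in Step~\ref{def:E2:alg:step:putalphabet}), and $\hat Q_{i,j}=Q_{i,j}$ off the route. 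Since $\nu_k(Q)$ is the number of columns in which the alphabet $k$ occurs and no column is touched off the route, it suffices to understand the route.

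The conceptual point is that, by the remarks preceding Algorithm~\ref{algorithm:checkvarphi}, the reverse jeu de taquin is performed on a region in which $Q$ is genuinely column-strict, and within a column-strict region each alphabet occurs at most once per column. Hence inside that region the number of columns containing $k$ equals the number of boxes containing $k$, and the jeu de taquin preserves the latter because it preserves the multiset of entries. The final filling in Step~\ref{def:E2:alg:step:putalphabet} only duplicates the entry $Q_{\rho_0}$, which by Remark~\ref{remark:varphi} already occurs in its column, so it changes neither $\nu$ nor the column-strictness away from the single new vertical equality it creates. Organising the route by columns, the net effect is a transport: for each visited column $c$ the bottom entry $v_c$ of its route-block is deleted from column $c$ and reappears, in the same row, at the top of the route-block in column $c+1$; moreover the $v_c$ are weakly increasing in $c$, so for a fixed alphabet $k$ the columns $c$ with $v_c=k$ form an interval and the transports telescope, leaving only a possible loss of $k$ at the left end and a possible gain of $k$ at the right end.

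It remains to check that these boundary effects cancel, and this is the step I expect to be the main obstacle. Concretely I would use the reverse-plane-partition inequalities together with the selection rule of the reverse sliding at the horizontal step entering a column: that rule forces $Q_{p-1,\,c+1}<v_c\le Q_{p,\,c+1}$, where $p$ is the shared row of the two blocks, which pins down when $v_c$ is already present in the neighbouring column and hence when a genuine gain occurs; a symmetric analysis at the bottom of the block governs when a genuine loss occurs. The delicacy is that a column may contain $k$ both inside and outside the column-strict region, so the local region-content bookkeeping must be reconciled with the global column count; this is precisely where monotonicity and Lemma~\ref{lemma:slidingroute1} (together with Lemma~\ref{lemma:jdt1}) are needed, and where one must argue on the genuine reverse plane partitions $Q$ and $\hat Q$ rather than slide by slide, since an individual slide need not preserve $\nu$. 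As a consistency check, Equation~\eqref{eq:nu} with $\delta(\hat Q)=\delta(Q)+1$ and the shape of $\hat Q$ having exactly one more box than $\lambda$ already forces $\sum_k\nu_k(\hat Q)=\sum_k\nu_k(Q)$, in agreement with the per-alphabet balance.
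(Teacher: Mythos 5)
Your reduction is set up correctly: the description of how $\check\varphi$ rearranges entries along the reverse-sliding route ($\hat Q_{\rho_t}=Q_{\rho_{t-1}}$, the restored value at $\rho_0$), the observation that only the horizontal slides can change column contents, the weak monotonicity of the transported values $v_c$, and the telescoping over each constant interval $\set{c\mid v_c=k}=[c',c'']$ are all right. But the proposal stops exactly where the lemma actually lives. After your telescoping, what remains is the identity $A_{c'}=A_{c''+1}$, where $A_c$ records whether column $c$ of $Q$ contains $k$ at some box other than the one the route removes; equivalently, column $c'$ must retain a copy of $k$ after losing $v_{c'}=k$ if and only if column $c''+1$ already contained $k$ before gaining it. You name the inequalities that would decide this (``that rule forces $Q_{p-1,c+1}<v_c\le Q_{p,c+1}$'', ``a symmetric analysis at the bottom of the block'') but never carry the analysis out, and you yourself flag it as ``the step I expect to be the main obstacle.'' That step is not routine: for instance, when $Q_{p_{c''},c''+1}=k$ one must use the vertical-slide selection rule to deduce $Q_{p_{c''}+1,c''}=k$ and hence a retained copy on the left; when the block in column $c''+1$ is a single box one must instead invoke $v_{c''+1}\neq k$ to rule out a spurious gain; and when $r_1(P)=f_1(Q)$ one needs $r_3'(P,Q)>f_1'(Q)$ (the consequence of Lemma~\ref{lemma:slidingroute1} recorded after Algorithm~\ref{algorithm:checkvarphi}) to exclude a copy of $k$ sitting just above the route in column $c'$ via a vertical equality of $Q$. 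None of this appears, so as written this is an accurate plan rather than a proof.

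For comparison, the paper disposes of the lemma in one sentence: $\check{\varphi}(P,Q)$ is obtained from $Q$ by the (reverse) jeu de taquin, which preserves column-strictness, hence $\nu$ is unchanged. Your analysis shows, correctly, that this cannot be the whole story --- column-strictness on the sliding region controls box-counts there, not the global column-counts $\nu_k$, and a single horizontal slide need not preserve $\nu$ --- so your refusal to accept the one-line argument is warranted. But the burden you thereby identify is yours to discharge, and the proposal does not discharge it.
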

Since Algorithm \ref{algorithm:checkvarphi} adds only one new box,
Equation \eqref{eq:nu} implies the following:
\begin{lemma}
\label{lemma:delta}
For $(P,Q)\in {\check{\RRR}}_{\lambda}$,
$ \delta(Q)=\delta(\check\varphi(P,Q))-1$.
\end{lemma}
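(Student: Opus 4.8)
The plan is to derive the identity purely by a box count, combining the enumerative formula in Equation \eqref{eq:nu} with the invariance of $\nu$ already recorded in Lemma \ref{lemma:nu}. Write $\hat Q=\check{\varphi}(P,Q)$. The first thing I would pin down is that Algorithm \ref{algorithm:checkvarphi} enlarges the underlying shape by exactly one box: by Remark \ref{remark:varphi} the shape of $\hat Q$ is $\lambda\cup\Set{(r_2(P),r_2'(P))}$, so it has $\numof{\lambda}+1$ boxes. This is the only structural input; everything else is arithmetic.

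Next I would apply Equation \eqref{eq:nu} to each of the two reverse plane partitions. For $Q\in\QQQ_\lambda^+$ of shape $\lambda$ it gives the first line below, and for $\hat Q$, whose shape has $\numof{\lambda}+1$ boxes, it gives the second:
\begin{align*}
 \numof{\lambda} &= \delta(Q)+\sum_i \nu_i(Q),\\
 \numof{\lambda}+1 &= \delta(\hat Q)+\sum_i \nu_i(\hat Q).
\end{align*}
By Lemma \ref{lemma:nu} we have $\nu(Q)=\nu(\hat Q)$, so the two sums $\sum_i \nu_i(Q)$ and $\sum_i \nu_i(\hat Q)$ agree. Subtracting the first identity from the second therefore cancels the $\nu$-contributions and leaves $1=\delta(\hat Q)-\delta(Q)$, which is precisely the assertion $\delta(Q)=\delta(\check{\varphi}(P,Q))-1$.

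I do not expect any genuine obstacle here, since the substance has already been absorbed into the two cited ingredients. The one point I would make sure is firmly established before invoking it is that $\check{\varphi}$ changes the total box count by exactly one and in no other way — that the reverse jeu de taquin in Step \ref{def:E2:alg:step:jdt} together with the filling in Step \ref{def:E2:alg:step:putalphabet} produces a bona fide reverse plane partition of shape $\lambda\cup\Set{(r_2(P),r_2'(P))}$. This is exactly the content of Remark \ref{remark:varphi}, and it is consistent with the preservation of column-strictness under jeu de taquin used in the proof of Lemma \ref{lemma:nu}, so the count is legitimate and the conclusion follows immediately.
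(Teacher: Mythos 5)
Your proposal is correct and matches the paper's own argument, which likewise deduces the lemma from the fact that Algorithm \ref{algorithm:checkvarphi} adds exactly one box, combined with Equation \eqref{eq:nu} and the invariance of $\nu$ from Lemma \ref{lemma:nu}. You have merely written out the subtraction that the paper leaves implicit.
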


Consider Algorithm \ref{algorithm:hatvarphi}.
\begin{lemma}
\label{lemma:hatphi:r}
For $(P,Q)\in\hat\RRR_\lambda$, we have
\begin{align*}
r_1(\hat{\varphi}(P,Q)) &=f_3(P,Q)=f_1(Q),\\
r_1'(\hat{\varphi}(P,Q)) &=f_3'(P,Q).
\end{align*}
\end{lemma}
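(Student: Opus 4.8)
The plan is to determine, box by box, which entries of $\hat{\varphi}(P,Q)$ carry more than one alphabet, and then to read $r_1$ and $r_1'$ off that description. The equality $f_3(P,Q)=f_1(Q)$ is immediate, since Algorithm~\ref{algorithm:hatvarphi} \emph{defines} $(f_3(P,Q),f_3'(P,Q))$ to be $(f_1(Q),j)$ for the column $j$ produced in Step~\ref{def:F2:alg:step:findpos}. So it remains to prove $r_1(\hat{\varphi}(P,Q))=f_1(Q)$ and $r_1'(\hat{\varphi}(P,Q))=f_3'(P,Q)$; that is, the last row of $\hat{\varphi}(P,Q)$ carrying a multi-element box is row $f_1(Q)$, and within that row the rightmost multi-element box lies in column $f_3'(P,Q)$.

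First I would check that no box strictly below row $f_1(Q)$ acquires a second alphabet. Since $(P,Q)\in{\hat{\RRR}}_\lambda$ forces $r_1(P)\le f_1(Q)$ in both defining cases, every box of $P$ in a row $>f_1(Q)$ is single-valued by the definition of $r_1(P)$. The only operations of Algorithm~\ref{algorithm:hatvarphi} affecting those rows are the corner removal and the reverse row insertion of Step~\ref{def:F2:alg:step:revbumping}; both keep every box in a row $>f_1(Q)$ single-valued (the reverse insertion being well-defined there, as noted in the remark preceding the algorithm), while the final appending step alters only row $f_1(Q)$. Conversely, that final step appends $y$ to $(f_1(Q),f_3')$, and the choice of $j=f_3'$ in Step~\ref{def:F2:alg:step:findpos} makes every alphabet already present in this box smaller than $y$, so the box becomes multi-element. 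Together these give $r_1(\hat{\varphi}(P,Q))=f_1(Q)=f_3(P,Q)$.

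For the column I would use that the reverse insertion of Step~\ref{def:F2:alg:step:revbumping} stops by bumping $y$ out of row $f_1(Q)+1$, hence leaves row $f_1(Q)$ untouched, so every box of $\hat{\varphi}(P,Q)$ in row $f_1(Q)$ other than $(f_1(Q),f_3')$ agrees with $P$. If $r_1(P)<f_1(Q)$, the whole of row $f_1(Q)$ is single-valued in $P$, so $(f_1(Q),f_3')$ is the unique, hence rightmost, multi-element box of that row. If instead $r_1(P)=f_1(Q)$ and $r_2(P)>f_2(Q)$, I would apply Lemma~\ref{lemma:bumpingroute1} to the integer-valued tableau $T$ formed by the rows $>f_1(Q)$ of $P$: the forward insertion building $R(P)$ inserts $x=\max(P_{r_1(P),r_1'(P)})$ from row $f_1(Q)+1$ and creates the new box $(r_2(P),r_2'(P))$, whereas the reverse insertion of Step~\ref{def:F2:alg:step:revbumping} removes the corner $(f_2(Q),f_2'(Q))$ of $T$ and bumps $y$ out of row $f_1(Q)+1$; both act on the same $T$, and since $f_2(Q)<r_2(P)$ the lemma yields $x<y$. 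Because the rows of $P$ are weakly increasing, $\max(P_{f_1(Q),r_1'(P)})=x<y$ forces every alphabet in columns $\le r_1'(P)$ of row $f_1(Q)$ to be $<y$, so the position found in Step~\ref{def:F2:alg:step:findpos} satisfies $f_3'\ge r_1'(P)$. As every box of row $f_1(Q)$ in a column $>r_1'(P)$ is single-valued by the definition of $r_1'(P)$, the box $(f_1(Q),f_3')$ is the rightmost multi-element box of the row, whence $r_1'(\hat{\varphi}(P,Q))=f_3'(P,Q)$.

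The hard part is the case $r_1(P)=f_1(Q)$: one must recognise that $R(P)$ and Algorithm~\ref{algorithm:hatvarphi} carry out a forward and a reverse row insertion on \emph{the same} integer-valued tableau, namely the portion of $P$ lying strictly below row $f_1(Q)$, so that Lemma~\ref{lemma:bumpingroute1} applies and converts the hypothesis $r_2(P)>f_2(Q)$ into $x<y$. The quantity that actually controls the argument is $r_1'(P)$, the rightmost column with a multi-element box in row $f_1(Q)$; everything else is bookkeeping about which boxes stay single-valued.
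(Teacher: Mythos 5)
Your proof is correct and follows essentially the same route as the paper's: $f_3(P,Q)=f_1(Q)$ holds by definition, the multi-valued boxes of $\hat{\varphi}(P,Q)$ are exactly those of $P$ together with $(f_3(P,Q),f_3'(P,Q))$, and in the case $r_1(P)=f_1(Q)$, $r_2(P)>f_2(Q)$ one applies Lemma \ref{lemma:bumpingroute1} to the rows strictly below $f_1(Q)$ to obtain $x<y$ and hence that the appended box lies weakly to the right of column $r_1'(P)$. If anything you are slightly more careful than the paper, which states the key inequality as $\max(P_{f_1(Q),f_1'(Q)})<y$, whereas your $\max(P_{f_1(Q),r_1'(P)})<y$ is what Lemma \ref{lemma:bumpingroute1} actually yields and what the conclusion $r_1'(\hat{\varphi}(P,Q))=f_3'(P,Q)$ requires.
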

\begin{proof}
It follows from the definition of Algorithm \ref{algorithm:hatvarphi}
that $f_3(P,Q)=f_1(Q)$.
Let $\check P =\hat{\varphi}(P,Q)$.
It follows that
\begin{align*}
  \Set{(i,j)| \text{ $\numof{\check P_{i,j}}>1$ for some $j$}}\\
=  \Set{(i,j)| \text{ $\numof{ P_{i,j}}>1$ for some $j$}}
\cup \Set{(f_3(P,Q),f_3'(P,Q))}.
\end{align*}
Since $(P,Q)\in\hat\RRR_\lambda$, we have
\begin{enumerate}
\item $r_1(P)<f_1(Q)$; or
\item $r_1(P)=f_1(Q)$ and $r_2(P)> f_2(Q)$.
\end{enumerate}
If $r_1(P)<f_1(Q)$, then 
it is easy to show that
$r_1(\check P)=f_3(P,Q)$ and that $r_1'(\check P)=f_3'(P,Q)$.
Assume that $r_1(P)=f_1(Q)$ and that $r_2(P)> f_2(Q)$.
In this case, we have $r_1(P)=f_1(Q)=f_3(P,Q)$.
Hence $r_1(\hat P)=f_3(P,Q)$.
Moreover, since $r_2(P)> f_2(Q)$,
it follows from Lemma \ref{lemma:bumpingroute1} 
that
$\max(P_{f_1(Q),f_1'(Q)})<y$,
which implies $f_3'(P,Q) \geq f_1'(Q)$.
Hence we have
$r_1'(\check P)=f_3'(P,Q)$.
\end{proof}
Since the row insertion is the inverse of the reverse row insertion,
we have the following:
\begin{lemma}
\label{lemma:hatvarphi:f2r2}
For $(P,Q)\in\hat\RRR_\lambda$, we have
\begin{align*}
r_2(\hat{\varphi}(P,Q)) &=f_2(Q)\\
r_2'(\hat{\varphi}(P,Q)) &=f'_2(Q).
\end{align*}
 \end{lemma}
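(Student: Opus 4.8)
The plan is to unwind the definition of $R(\hat\varphi(P,Q))$ and to recognize the row insertion it performs as the exact inverse of the reverse row insertion carried out inside Algorithm \ref{algorithm:hatvarphi}. Write $\hat P = \hat\varphi(P,Q)$. First I would invoke Lemma \ref{lemma:hatphi:r} to locate the box from which Algorithm \ref{algorithm:R(P)} starts: since $r_1(\hat P) = f_1(Q)$ and $r_1'(\hat P) = f_3'(P,Q)$, the row insertion defining $R(\hat P)$ begins at the $(f_1(Q)+1)$-th row. I would then appeal to Remark \ref{remark:hatphi} to identify the alphabet being reinserted, namely that the entry $y$ appended in the final step of Algorithm \ref{algorithm:hatvarphi} is the maximum of the box $(f_1(Q), f_3'(P,Q))$ of $\hat P$; hence the alphabet that Algorithm \ref{algorithm:R(P)} removes and reinserts is exactly this $y$.

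Next I would observe that deleting $y$ from that box returns, on the rows strictly below the $f_1(Q)$-th, precisely the tableau $P^{(3)}$ of Algorithm \ref{algorithm:hatvarphi}. Thus computing $R(\hat P)$ reduces to inserting $y$ into $P^{(3)}$ by the row insertion from the $(f_1(Q)+1)$-th row. The heart of the argument is then to match this against Step \ref{def:F2:alg:step:revbumping}: there $P^{(3)}$ was produced from $P^{(2)}$, which is $P$ with the corner $(f_2(Q), f_2'(Q))$ deleted, by a reverse row insertion running through rows $f_1(Q)+1, \ldots, f_2(Q)-1$ and bumping $y$ out of the $(f_1(Q)+1)$-th row. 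Because the row insertion is the inverse of the reverse row insertion, reinserting $y$ into $P^{(3)}$ from the $(f_1(Q)+1)$-th row undoes this process: it rebuilds $P^{(2)}$ and appends a new box at exactly the corner that had been removed, that is $(f_2(Q), f_2'(Q))$. This gives $r_2(\hat\varphi(P,Q)) = f_2(Q)$ and $r_2'(\hat\varphi(P,Q)) = f_2'(Q)$.

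The step I expect to be the main obstacle is verifying that the forward and reverse insertions agree as \emph{partial} processes rather than as the full row-insertion / reverse row-insertion pair: both run only through rows $f_1(Q)+1$ up to $f_2(Q)-1$ before the new box is placed, and I must check that the same bumped alphabet is passed between each pair of consecutive rows in both directions. I would settle this by the single-row inverse property of the (reverse) row bumping together with a downward induction on the row index, remarking that the set-valued boxes lying in or above the $f_1(Q)$-th row take no part in either insertion and therefore cannot interfere.
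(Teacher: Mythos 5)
Your proposal is correct and follows essentially the same route as the paper, which disposes of this lemma with the single observation that the row insertion is the inverse of the reverse row insertion; you simply make explicit the bookkeeping (that $y=\max(\hat P_{f_1(Q),f_3'(P,Q)})$ via Lemma \ref{lemma:hatphi:r} and Remark \ref{remark:hatphi}, that deleting it recovers $P^{(3)}$, and that the partial insertions invert each other row by row). The row-by-row induction you flag as the main obstacle is exactly the content of the paper's appeal to the inverse property, so nothing is missing.
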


Finally we consider $\check{\Phi}$ and  $\hat{\Phi}$.
\begin{lemma}
\label{lemma:fromchecktohat}
Let $(P,Q)\in {\check{\RRR}}_\lambda$,  
If $(\hat P,\hat Q)=\check{\Phi}(P,Q)$,
then  $(\hat P,\hat Q)$ is in $\hat{\RRR}_{\hat\lambda}$,
where $\hat\lambda$ is the shape of $\hat P$.
\end{lemma}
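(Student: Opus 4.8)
The plan is to verify directly the conditions defining membership in $\hat{\RRR}_{\hat\lambda}$ for the pair $(\hat P,\hat Q)=(R(P),\check\varphi(P,Q))$: that $\hat P$ and $\hat Q$ share a common shape $\hat\lambda$, that $(\hat P,\hat Q)\in\RRR^+_{\hat\lambda}$, and that the defining inequalities of $\hat{\RRR}_{\hat\lambda}$ hold. The first two are quick. Algorithm \ref{algorithm:R(P)} adjoins to $\lambda$ exactly the box $(r_2(P),r_2'(P))$, while Remark \ref{remark:varphi} shows $\check\varphi(P,Q)$ has the very same shape $\lambda\cup\Set{(r_2(P),r_2'(P))}$; hence $\hat P\in\PPP_{\hat\lambda}$ and $\hat Q\in\QQQ_{\hat\lambda}$ with a common shape $\hat\lambda$. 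Membership in $\RRR^+_{\hat\lambda}$ follows because $\delta(\hat Q)=\delta(Q)+1>0$ by Lemma \ref{lemma:delta}. This reduces the problem to checking the inequalities on $r_1,r_2,f_1,f_2$.

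Next I would record the comparison of first indices. By Lemma \ref{lemma:varphi:f} we have $f_1(\hat Q)=r_1(P)$, while Lemma \ref{lemma:R(P)} gives $r_1(\hat P)=r_1(R(P))\le r_1(P)$. If this inequality is strict, then $r_1(\hat P)<f_1(\hat Q)$ and $(\hat P,\hat Q)$ lies in $\hat{\RRR}_{\hat\lambda}$ by the first defining clause, and we are done. The remaining, essential case is $r_1(\hat P)=r_1(P)=f_1(\hat Q)$, where the second clause requires $r_2(\hat P)>f_2(\hat Q)$. Since Lemma \ref{lemma:varphi:f2r2} identifies $f_2(\hat Q)=r_2(P)$, it suffices to prove the strict inequality $r_2(\hat P)>r_2(P)$ of new-box rows.

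The heart of the argument is this last inequality, and the plan is to read it off from the Row Bumping Lemma (Lemma \ref{lemma:bumpingroute2}) applied to the subtableau of rows strictly below $r_1(P)$. Let $x=\max(P_{r_1(P),r_1'(P)})$ be the alphabet inserted from the $(r_1(P)+1)$-th row in forming $\hat P=R(P)$, with resulting new box $(r_2(P),r_2'(P))$. Since $r_1(R(P))=r_1(P)$ in this case, computing $R(\hat P)$ (which produces $r_2(\hat P)$) inserts a second alphabet $x'=\max(R(P)_{r_1(P),r_1'(R(P))})$ from the same row into the tableau resulting from the first insertion. I would then show $x'<x$ by a short case split on $r_1'(R(P))\le r_1'(P)$: if equality holds, then $x'$ is the maximum of $P_{r_1(P),r_1'(P)}\setminus\{x\}$, whence $x'<x$; if $r_1'(R(P))<r_1'(P)$, then column-strictness of the $r_1(P)$-th row together with $\numof{P_{r_1(P),r_1'(P)}}>1$ gives $x'\le\min(P_{r_1(P),r_1'(P)})<x$. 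With $x>x'$, the case $x>y$ of Lemma \ref{lemma:bumpingroute2}, after shifting indices so that insertion from the $(r_1(P)+1)$-th row becomes insertion into the subtableau from its top, yields that the new box of the first insertion lies strictly above that of the second, i.e.\ $r_2(P)<r_2(\hat P)$, exactly as needed.

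The main obstacle I anticipate is precisely establishing $x'<x$ and then legitimately invoking Lemma \ref{lemma:bumpingroute2}: one must check that the two insertions really are two successive insertions into a single integer-valued column-strict tableau (the portion of $P$ below row $r_1(P)$, whose column-strictness is guaranteed by the discussion preceding Algorithm \ref{algorithm:R(P)}), that the column from which $x'$ is drawn is irrelevant to the row comparison, and that the definition of $r_1'$ as the rightmost multi-alphabet box of the last multi-alphabet row is used correctly in both branches of the case split. Everything else is bookkeeping with the shape and with the already-proved index identities.
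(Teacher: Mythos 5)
Your proposal is correct and follows essentially the same route as the paper's own proof: reduce to the case $r_1(\hat P)=r_1(P)=f_1(\hat Q)$ via Lemmas \ref{lemma:varphi:f} and \ref{lemma:R(P)}, then deduce $r_2(\hat P)>r_2(P)=f_2(\hat Q)$ from the Row Bumping Lemma \ref{lemma:bumpingroute2} applied to the two successive insertions below row $r_1(P)$, together with Lemma \ref{lemma:varphi:f2r2}. Your explicit case split establishing the \emph{strict} inequality $\hat x<x$ is in fact slightly more careful than the paper, which only records $\hat x\le x$ even though the strict version is what the second clause of Lemma \ref{lemma:bumpingroute2} requires.
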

\begin{proof}
Since $\hat Q=\check{\varphi}(P,Q)$,
we have $f_1(\hat Q)=r_1(P)$
by Lemma \ref{lemma:varphi:f}.
Since $\hat P=R(P)$,
we have 
$r_1(\hat P) \leq r_1(P)$
 by Lemma \ref{lemma:R(P)}.
Hence $r_1(\hat P) \leq f_1(\hat Q)$.
If $r_1(\hat P) < f_1(\hat Q)$,
then $(\hat P,\hat Q)\in \hat{\RRR}_{\hat\lambda}$.
Assume that $r_1(\hat P) = f_1(\hat Q)$.
Let
\begin{align*}
 x&=\max (P_{r_1(P),r'_1(P)})\\
 \hat x&=\max(\hat P_{r_1(\hat P),r'_1(\hat P)}). 
\end{align*}
Since $f_1(\hat Q)=r_1(P)$,
we have $r_1(\hat P) = r_1(P)$.
By Lemma \ref{lemma:R(P)},
we have $r'_1(\hat P) \leq r'_1(P)$.
Hence $\hat x \leq x$.
Consider only the rows strictly below the $r_1(P)$-th row.
Since 
we obtain $\hat P$ from $P$ by the row insertion with $x$,
it follows from Lemma \ref{lemma:bumpingroute2}
that $r_2(P)<r_2(\hat P)$.
By Lemma \ref{lemma:varphi:f2r2},
we have $r_2(P)=f_2(\hat Q)$.
Hence $f_2(\hat Q)<r_2(\hat P)$ and $(\hat P,\hat Q)\in \hat{\RRR}_{\hat\lambda}$.
\end{proof}

\begin{lemma}
\label{lemma:fromhattocheck}
Let $(P,Q)\in {\hat{\RRR}}_\lambda$,  
If $(\check P,\check Q)=\hat{\Phi}(P,Q)$,
then  $(\check P,\check Q)$ is in $\check{\RRR}_{\check\lambda}$,
where $\check\lambda$ is the shape of $\check P$.
\end{lemma}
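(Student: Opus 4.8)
The plan is to mirror the proof of Lemma \ref{lemma:fromchecktohat}, interchanging the roles of the row bumping and the jeu de taquin. Write $\check P=\hat{\varphi}(P,Q)$ and $\check Q=F(Q)$, so that $(\check P,\check Q)=\hat{\Phi}(P,Q)$, and recall that membership in $\check{\RRR}_{\check\lambda}$ amounts to either $r_1(\check P)>f_1(\check Q)$, or else $r_1(\check P)=f_1(\check Q)$ together with $r_2(\check P)\le f_2(\check Q)$. First I would locate the first indices. By Lemma \ref{lemma:hatphi:r} applied to $\check P=\hat{\varphi}(P,Q)$ we have $r_1(\check P)=f_1(Q)$, while Lemma \ref{lemma:F(Q)} applied to $\check Q=F(Q)$ gives $f_1(\check Q)\le f_1(Q)$. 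Hence $f_1(\check Q)\le r_1(\check P)$. If the inequality is strict, the first defining condition of $\check{\RRR}_{\check\lambda}$ holds and we are done.

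The remaining case is $r_1(\check P)=f_1(\check Q)$, which forces $f_1(F(Q))=f_1(Q)$; then the second assertion of Lemma \ref{lemma:F(Q)} yields $f_1'(\check Q)<f_1'(Q)$, and it suffices to verify $r_2(\check P)\le f_2(\check Q)$. By Lemma \ref{lemma:hatvarphi:f2r2} we have $r_2(\check P)=f_2(Q)$, so the goal reduces to the inequality $f_2(Q)\le f_2(F(Q))=f_2(\check Q)$ between the two exit corners.

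To establish $f_2(Q)\le f_2(\check Q)$ I would read the formation of $F(Q)$ and of $F(\check Q)=F(F(Q))$ as two successive slides, exactly the configuration of Lemma \ref{lemma:slidingroute2}. The first slide begins at the null box $(f_1(Q),f_1'(Q))$ of $Q$ and exits at the corner $(f_2(Q),f_2'(Q))$, transforming $Q$ into $\check Q=F(Q)$; the second begins at the null box $(f_1(\check Q),f_1'(\check Q))=(f_1(Q),f_1'(\check Q))$ of $\check Q$ and exits at $(f_2(\check Q),f_2'(\check Q))$. Both slides start in the same row $f_1(Q)$, and the second starts strictly to the left of the first since $f_1'(\check Q)<f_1'(Q)$. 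Thus, taking $j'=f_1'(\check Q)<f_1'(Q)=j$, Lemma \ref{lemma:slidingroute2} gives $f_2(Q)\le f_2(\check Q)$, which is precisely what we need; combined with $r_2(\check P)=f_2(Q)$ this proves $(\check P,\check Q)\in\check{\RRR}_{\check\lambda}$.

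The delicate point, and the step I expect to be the main obstacle, is the identification of a single column-strict tableau on which Lemma \ref{lemma:slidingroute2} actually operates. Since $f_1(Q)$ is the last row carrying a vertical equality, the restriction of $Q$ to the rows strictly below $f_1(Q)$ is column-strict, and as in the reasoning behind Lemma \ref{lemma:F(Q):shape} the first step of each jeu de taquin moves the null straight down from row $f_1(Q)$ into row $f_1(Q)+1$; thereafter each null stays inside this column-strict region. I would therefore let $T$ be this column-strict restriction, with the forced initial down-moves accounting for the entries in row $f_1(Q)$, and identify $T'$ with the corresponding restriction of $F(Q)$, so that the second slide is literally performed on the result $T'$ of the first, matching the hypotheses of Lemma \ref{lemma:slidingroute2}. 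Once the null positions and the column comparison $f_1'(\check Q)<f_1'(Q)$ are checked against that lemma, the conclusion follows immediately.
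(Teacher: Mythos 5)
Your proposal is correct and follows essentially the same route as the paper's own proof: locate $r_1(\check P)=f_1(Q)$ via Lemma \ref{lemma:hatphi:r}, bound $f_1(\check Q)\leq f_1(Q)$ via Lemma \ref{lemma:F(Q)}, and in the equality case combine $f_1'(\check Q)<f_1'(Q)$ with Lemma \ref{lemma:slidingroute2} and Lemma \ref{lemma:hatvarphi:f2r2} to get $r_2(\check P)=f_2(Q)\leq f_2(\check Q)$. Your final paragraph justifying why Lemma \ref{lemma:slidingroute2} is applicable (restricting to the column-strict region below row $f_1(Q)$) is more careful than the paper, which applies that lemma without comment.
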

\begin{proof}
Since $\check P=\hat{\varphi}(P,Q)$,
we have $f_1(Q)=r_1(\check P)$
by Lemma \ref{lemma:hatphi:r}.
Since $\check Q=F(Q)$,
we have 
$f_1(\check Q) \leq f_1(Q)$
 by Lemma \ref{lemma:F(Q)}.
Hence $f_1(\check Q) \leq r_1(\check P)$.
If $f_1(\check Q) < r_1(\check P)$,
then $(\check P,\check Q)\in \check{\RRR}_{\check\lambda}$.
Assume that $f_1(\check Q) = r_1(\check P)$.
Since $f_1(Q)=r_1(\check P)$,
we have $f_1(\check P) = f_1(P)$.
 By Lemma \ref{lemma:F(Q)}.
we have $f'_1(\check Q) < f'_1(Q)$.
Hence, by Lemma \ref{lemma:slidingroute2},
we have $f_2(Q)\leq f_2(\check Q)$.
By Lemma \ref{lemma:hatvarphi:f2r2},
we have $f_2(Q)=r_2(\check P)$.
Hence $r_2(\check P)\leq f_2(\check Q)$ and $(\check P,\check Q)\in \check{\RRR}_{\check\lambda}$.
\end{proof}

\begin{lemma}
\label{lemma:bij1}
Let $(P,Q)\in {\check{\RRR}}_{\lambda}$ and $(\hat P,\hat Q)\in {\hat{\RRR}}_{\hat\lambda}$.
If $(\hat P,\hat Q)=\check{\Phi}(P,Q)$,
then  $\hat\Phi(\hat P,\hat Q)=(P,Q)$.
\end{lemma}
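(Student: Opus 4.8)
The plan is to treat the two coordinates of $\hat{\Phi}(\hat P,\hat Q)=(\hat{\varphi}(\hat P,\hat Q),F(\hat Q))$ separately and to show $\hat{\varphi}(\hat P,\hat Q)=P$ and $F(\hat Q)=Q$, where $\hat P=R(P)$ and $\hat Q=\check{\varphi}(P,Q)$. First I would note that $(\hat P,\hat Q)\in\hat{\RRR}_{\hat\lambda}$ by Lemma \ref{lemma:fromchecktohat}, so that $\hat{\Phi}(\hat P,\hat Q)$ is defined, and I would import the index data from Lemmas \ref{lemma:varphi:f} and \ref{lemma:varphi:f2r2}: $f_1(\hat Q)=r_1(P)$, $f_1'(\hat Q)=r_3'(P,Q)$, $f_2(\hat Q)=r_2(P)$ and $f_2'(\hat Q)=r_2'(P)$. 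These identities let me align each index consumed by Algorithms \ref{algorithm:F} and \ref{algorithm:hatvarphi} on $(\hat P,\hat Q)$ with the indices created by Algorithms \ref{algorithm:R(P)} and \ref{algorithm:checkvarphi} on $(P,Q)$.

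For $F(\hat Q)=Q$, the key point is that $\hat Q$ coincides with the intermediate tableau $Q^{(3)}$ of Algorithm \ref{algorithm:checkvarphi} everywhere except at the box $(r_1(P),r_3'(P,Q))$, into which Step \ref{def:E2:alg:step:putalphabet} deposited an alphabet; by Remark \ref{remark:varphi} this box satisfies $\hat Q_{r_1(P),r_3'(P,Q)}=\hat Q_{r_1(P)+1,r_3'(P,Q)}$, so $f_1(\hat Q)$ and $f_1'(\hat Q)$ indeed select it. Algorithm \ref{algorithm:F} then declares exactly this box null, which restores $Q^{(3)}$, and slides the null box outward by the jeu de taquin. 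Since the jeu de taquin is the inverse of the reverse jeu de taquin used in Step \ref{def:E2:alg:step:jdt}, the null box retraces its path back to the cocorner $(r_2(P),r_2'(P))$, recovering $Q^{(2)}$; deleting that box returns $Q$.

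For $\hat{\varphi}(\hat P,\hat Q)=P$, I would run Algorithm \ref{algorithm:hatvarphi} against Algorithm \ref{algorithm:R(P)}. The index identities make $\hat{\varphi}$ remove the corner $(r_2(P),r_2'(P))$ of $\hat P=R(P)$, which is precisely the box produced by the row insertion in Algorithm \ref{algorithm:R(P)}, and then reverse-row-insert its entry upward, stopping when an alphabet $y$ is ejected from row $r_1(P)+1$. Because the reverse row insertion inverts the row insertion and both act only on rows strictly below $r_1(P)$, this reconstructs $P$ on those rows and forces $y$ to equal the alphabet $x=\max(P_{r_1(P),r_1'(P)})$ that Algorithm \ref{algorithm:R(P)} had extracted. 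It then remains to verify that Step \ref{def:F2:alg:step:findpos} returns $j=r_1'(P)$ and that appending $y=x$ rebuilds the original set-valued box. Here I would use the set-valued column-strict inequalities: since $x$ is the strict maximum of the multi-alphabet box $P_{r_1(P),r_1'(P)}$ and the entries weakly increase along row $r_1(P)$, every alphabet in columns $\leq r_1'(P)$ of that row of $\hat P$ is $<x$, while $x=\max(P_{r_1(P),r_1'(P)})\leq\min(P_{r_1(P),r_1'(P)+1})$ forces every alphabet in columns $>r_1'(P)$ to be $\geq x$; hence the unique $j$ meeting both conditions of Step \ref{def:F2:alg:step:findpos} is $r_1'(P)$, and appending $x$ there recreates $P_{r_1(P),r_1'(P)}$. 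This argument is uniform whether $\hat P$ carries a singleton or a larger set at $(r_1(P),r_1'(P))$, so it covers both cases in the definition of $\hat{\RRR}$.

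The step I expect to be the main obstacle is this last one: confirming that the find-$j$ selection lands on exactly the column $r_1'(P)$ from which $R(P)$ stripped its maximal alphabet, with no competing multi-alphabet box in row $r_1(P)$ spoiling the two inequalities. Once both halves are in place, $\hat{\varphi}(\hat P,\hat Q)=P$ and $F(\hat Q)=Q$ combine to give $\hat{\Phi}(\hat P,\hat Q)=(P,Q)$, as required.
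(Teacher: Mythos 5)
Your proof is correct and follows essentially the same route as the paper's: split $\hat\Phi(\hat P,\hat Q)$ into its two components, use Lemmas \ref{lemma:varphi:f} and \ref{lemma:varphi:f2r2} to match the boxes consumed by Algorithms \ref{algorithm:F} and \ref{algorithm:hatvarphi} with the boxes produced by Algorithms \ref{algorithm:R(P)} and \ref{algorithm:checkvarphi}, and then invoke the inverse relationships between the (reverse) jeu de taquin and the (reverse) row insertion. Your additional verification that Step \ref{def:F2:alg:step:findpos} returns $j=r_1'(P)$ and reassembles the set $P_{r_1(P),r_1'(P)}$ is a detail the paper leaves implicit, and your argument for it is sound.
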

\begin{proof}
 Let $(\hat P,\hat Q)=\check{\Phi}(P,Q)$.
Since $\hat Q = \check\varphi(P,Q)$,
we have
\begin{align*}
 f_1(\hat Q)&=r_3(P,Q)\\
 f'_1(\hat Q)&=r'_3(P,Q)
\end{align*}
by Lemma \ref{lemma:varphi:f}.
Hence the box $(f_1(\hat Q),f'_1(\hat Q))$
is the box where 
the reverse jeu de taquin stops in Algorithm \ref{algorithm:checkvarphi}.
Since the jeu de taquin is 
 the inverse of  the reverse jeu de taquin,
we have $F(\hat Q)=Q$.
We also have
\begin{align*}
 f_2(\hat Q)&=r_2(P)\\
 f'_2(\hat Q)&=r'_2(P)
\end{align*}
by Lemma \ref{lemma:varphi:f2r2}.
Hence the box $(f_2(\hat Q), f'_2(\hat Q))$
is the new box added by  Algorithm \ref{algorithm:R(P)}.
Since the reverse row insertion is 
 the inverse of  the row insertion,
we have $\hat\varphi(\hat P,\hat Q)=P$.
\end{proof}

\begin{lemma}
\label{lemma:bij2}
Let $(\check P,\check Q)\in {\check{\RRR}}_{\check\lambda}$ and $(P,Q)\in {\hat{\RRR}}_{\lambda}$.
If $(\check P,\check Q)=\hat{\Phi}(P,Q)$,
then  $\check\Phi(\check P,\check Q)=(P,Q)$.
\end{lemma}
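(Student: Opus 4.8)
The plan is to run the proof of Lemma \ref{lemma:bij1} in reverse, interchanging the two operations. Since $(\check P,\check Q)=\hat\Phi(P,Q)$ means $\check P=\hat\varphi(P,Q)$ and $\check Q=F(Q)$, and since $\check\Phi(\check P,\check Q)=(R(\check P),\check\varphi(\check P,\check Q))$, it suffices to prove the two identities $R(\check P)=P$ and $\check\varphi(\check P,\check Q)=Q$.

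To obtain $R(\check P)=P$, I would first locate the distinguished box of $\check P$. By Lemma \ref{lemma:hatphi:r} we have $r_1(\check P)=f_3(P,Q)=f_1(Q)$ and $r_1'(\check P)=f_3'(P,Q)$, so $(r_1(\check P),r_1'(\check P))$ is exactly the box into which Algorithm \ref{algorithm:hatvarphi} appended the alphabet $y$ bumped out of the $(f_1(Q)+1)$-th row. Consequently Algorithm \ref{algorithm:R(P)}, applied to $\check P$, extracts this very alphabet and reinserts it by the row insertion starting from the $(r_1(\check P)+1)$-th row, which is the $(f_1(Q)+1)$-th row from which $\hat\varphi$ had bumped it. Because the row insertion is the inverse of the reverse row insertion, this reconstructs $P$; moreover the box it creates is $(r_2(\check P),r_2'(\check P))$.

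To obtain $\check\varphi(\check P,\check Q)=Q$, I would use Lemma \ref{lemma:hatvarphi:f2r2}, which gives $r_2(\check P)=f_2(Q)$ and $r_2'(\check P)=f_2'(Q)$. Thus the cocorner at which Algorithm \ref{algorithm:checkvarphi} introduces its null box is precisely the corner that the jeu de taquin of $F$ had ejected in forming $\check Q=F(Q)$. The reverse jeu de taquin then slides this null box inward until it reaches the $r_1(\check P)$-th row, and $r_1(\check P)=f_1(Q)$ is exactly the row in which the null box of $F$ originated. Since the reverse jeu de taquin is the inverse of the jeu de taquin and the final restoration step of Algorithm \ref{algorithm:checkvarphi} recreates the equal-column entry that the initial nulling step of $F$ had destroyed, this yields $\check\varphi(\check P,\check Q)=Q$, and $\check\Phi(\check P,\check Q)=(P,Q)$ follows.

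The step I expect to require the most care --- exactly as in Lemma \ref{lemma:bij1} --- is verifying that the start and stop data of the two algorithms match: that $R$ begins its insertion in the same row out of which $\hat\varphi$ bumped, and that $\check\varphi$ begins its reverse slide at the same box at which $F$ terminated. Both matchings are supplied by Lemmas \ref{lemma:hatphi:r} and \ref{lemma:hatvarphi:f2r2} together with the equality $r_1(\check P)=f_1(Q)$, after which the two inversions are immediate from the invertibility of the row insertion and the jeu de taquin.
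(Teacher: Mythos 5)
Your proposal is correct and follows essentially the same route as the paper's own proof: both reduce to the two identities $R(\check P)=P$ and $\check\varphi(\check P,\check Q)=Q$, and both establish them by matching the start/stop data of the algorithms via Lemmas \ref{lemma:hatphi:r} and \ref{lemma:hatvarphi:f2r2} and then invoking the invertibility of the row insertion and of the jeu de taquin. Your write-up is in fact slightly more explicit than the paper's about why the alphabet extracted by $R$ from the box $(r_1(\check P),r_1'(\check P))$ is exactly the alphabet $y$ that $\hat{\varphi}$ appended there.
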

\begin{proof}
 Let $(\check P,\check Q)=\hat{\Phi}(P,Q)$.
Since $\check P = \hat\varphi(P,Q)$,
we have
\begin{align*}
 r_1(\check P)&=f_3(P,Q)\\
 r'_1(\check P)&=f'_3(P,Q)
\end{align*}
by Lemma \ref{lemma:hatphi:r}.
Hence the box 
$ (r_1(\check Q), r'_1(\check Q))$
is the box where 
Algorithm \ref{algorithm:hatvarphi} appends an alphabet.
Since the row insertion is
 the inverse of  the row insertion,
we have $R(\check P)=P$.
We also have
\begin{align*}
 r_2(\check P)&=f_2(Q)\\
 r'_2(\check P)&=f'_2(Q)
\end{align*}
by Lemma \ref{lemma:hatvarphi:f2r2}.
Hence the box $(r_2(\check P), r'_2(\check P))$
is the box where 
the jeu de taquin stops in Algorithm \ref{algorithm:F}.
Since the reverse jeu de taquin is 
 the inverse of  jeu de taquin,
we have $\check\varphi(\check P,\check Q)=Q$.
\end{proof}

Since we have
Lemmas \ref{lemma:R(P):shape}
and \ref{lemma:fromchecktohat},
we can define
the map $\check\Phi_w$ from 
${\check{\RRR}}(w)$ to ${\hat{\RRR}}(w)$
by $\check\Phi_w(P,Q)=\check{\Phi}(P,Q)$.
Since we also have
Lemmas \ref{lemma:F(Q):shape}
and \ref{lemma:fromhattocheck},
we can define
the map $\hat\Phi_w$ from 
${\hat{\RRR}}(w)$ to ${\check{\RRR}}(w)$
by $\hat\Phi_w(P,Q)=\hat{\Phi}(P,Q)$.
It follows from  Lemmas  \ref{lemma:bij1} and \ref{lemma:bij2}
that $\check\Phi_w$ is the inverse of $\hat\Phi_w$.
Hence $\check\Phi_w$ and $\hat\Phi_w$ are bijections.
Since we also have Lemmas 
\ref{lemma:mu}, \ref{lemma:epsilon},
\ref{lemma:nu} and \ref{lemma:delta},
we obtain Theorem \ref{thm:main:bij}.

\section{The case of  Young diagrams with one column}
\label{sec:finalrmk}
%\subsection{The case of one column}
Here we consider only Young diagrams with one column.
In this case, 
we describe our bijection explicitly.
Let $(P,Q)\in \RRR^+_{(1^l)}$.
In this case,
$(P,Q)\in{\check{\RRR}}_{(1^l)}$ if and only if $r_1(P)>f_1(Q)$.
For $(P,Q)$ with $r_1(P)>f_1(Q)$,
it follows that $(P',Q')=\check{\Phi}(P,Q)$,
where $(P',Q')\in \RRR^+_{(1^{l+1})}$ 
is a pair of tableaux obtained by expand their $r_1(P)$-th rows, 
i.e., a pair of tableaux defined by
\begin{align*}
 P'_{i,1}&=\begin{cases}
P_{i,1} &(\text{if $1\leq i<r_1(P)$})\\
P_{r_1(P),1} \setminus  \Set{\max(P_{r_1(P),1})}&(\text{if $i=r_1(P)$})\\
\Set{\max(P_{r_1(P),1})} &(\text{if $i=r_1(P)+1$})\\
P_{i-1,1} &(\text{if $r_1(P)+1<i\leq l+1$})
	  \end{cases}
\\
 Q'_{i,1}&=\begin{cases}
Q_{i,1} &(\text{if $1\leq i\leq r_1(P)$})\\
Q_{i-1,1} &(\text{if $r_1(P)+1\leq i\leq l+1$}).
	  \end{cases}
\end{align*}
For $(P,Q)$ with $r_1(P) \leq f_1(Q)$,
it follows that $(P',Q')=\hat{\Phi}(P,Q)$,
where 
$(P',Q')\in \RRR^+_{(1^{l+1})}$ 
is the pair of tableaux obtained by folding their $f_1(Q)$-th rows, 
i.e.,
the pair  of tableaux defined by
\begin{align*}
 P'_{i,1}&=\begin{cases}
P_{i,1} &(\text{if $1\leq i<f_1(Q)$})\\
P_{f_1(Q),1} \cup P_{f_1(Q)+1,1}&(\text{if $i=f_1(Q)$})\\
P_{i+1,1} &(\text{if $r_1(Q)+1<i\leq l-1$})
	  \end{cases}
\\
 Q'_{i,1}&=\begin{cases}
Q_{i,1} &(\text{if $1\leq i<f_1(Q)$})\\
Q_{f_1(Q),1}=Q_{f_1(Q)+1,1} &(\text{if $i=f_1(Q)$})\\
Q_{i+1,1} &(\text{if $f_1(Q)< i\leq l-1$}).
	  \end{cases}
\end{align*}

\bibliographystyle{amsplain-url} 
\bibliography{x2.bib} 
%\bibliography{by-mr.bib} 
\end{document}